\documentclass[12pt,amstex,reqno]{amsart}
\usepackage{mathrsfs}
\overfullrule=5pt
\usepackage{amsfonts}
\usepackage{amsfonts}

\usepackage{epsfig}
\usepackage{amsmath}
\usepackage{amssymb}
\usepackage{amscd}
\usepackage{graphicx}

\usepackage{pstricks}

\topmargin=0pt \oddsidemargin=0pt \evensidemargin=0pt
\textwidth=15cm \textheight=22cm \raggedbottom
\input xy
\xyoption{all}

\newtheorem{thm}{Theorem}[section]
\newtheorem{lem}[thm]{Lemma}

\newtheorem{prop}[thm]{Proposition}

\newtheorem{cor}[thm]{Corollary}

\theoremstyle{definition}

\theoremstyle{remark}
\newtheorem{rem}[thm]{Remark}

\DeclareMathOperator{\supp}{supp}

\numberwithin{equation}{section}

\def \N {\mathbb N}

\def \Z {\mathbb Z}

\begin{document}

\title[$\Delta$-weakly mixing subset in positive entropy actions]{$\Delta$-weakly mixing subset in positive entropy actions of a nilpotent group}

\author[K. Liu]{Kairan Liu}
\address{K. Liu: Department of Mathematics, University of Science and Technology of China,
Hefei, Anhui, 230026, P.R. China}
\email{lkr111@mail.ustc.edu.cn}

\keywords{$\Delta$-transitivity, $\Delta$-weakly mixing, topological entropy, nilpotent groups}

%\date{}

\begin{abstract}
The notion of $\Delta$-weakly mixing subsets is introduced for countable torsion-free discrete group actions.
It is shown that for a finitely generated torsion-free discrete nilpotent group action, positive topological entropy implies the existence of $\Delta$-weakly mixing subsets, and while there exists a  finitely generated torsion-free discrete solvable group action which has positive topological entropy but without any $\Delta$-weakly mixing subsets.
\end{abstract}

\maketitle

\section{Introduction}
In this paper, let $\mathcal{T}$ be a countable discrete group with the unit $\theta_{\mathcal{T}}$.
By a \emph{$\mathcal{T}$-system} $(X,\mathcal{T})$ we mean a compact metric space $X$ endowed with a metric $\rho$, together with $\mathcal{T}$ acting on $X$ by homeomorphism, that is,
there exists a continuous map $\Psi: \mathcal{T}\times X\to X$, $\Psi(T,x)=Tx$ satisfying $\Psi(\theta_{\mathcal{T}},x)=x$, $\Psi(T,\Psi(S,x))=\Psi(TS,x)$ for each $T,S\in\mathcal{T}$ and
$x\in X$.
When $\mathcal{T}$ is the group $\mathbb{Z}$ of integers,
it is generated by the element $1$, in this case we let
$T\colon X\to X$, $x\mapsto \Psi(1,x)$ and denote this dynamical system by $(X,T)$. For a $\mathcal{T}$-system $(X,\mathcal{T})$ and $m\in\N$, $(X^m,\mathcal{T})$ is also a $\mathcal{T}$-system, where $X^m:=X\times X\times\dotsc \times X$ ($m$-times), and $T(x_1,x_2,\dotsc ,x_m):=(Tx_1,Tx_2,\dotsc ,Tx_m)$ for any $(x_1,x_2,\dotsc ,x_m)\in X^m$ and $T\in\mathcal{T}$.

Recurrence is one of the central topics in the study of $\mathcal{T}$-systems.
In 1978, Furstenberg and Weiss published a topological theorem generalizing Brikhoff's recurrence theorem and having interesting combinational corollaries, where $\mathcal{T}$ is an abelian group (see \cite{FW}). As a simple example due to Furstenberg shows that the statement is not valid when the assumption that $\mathcal{T}$ is commutative is omitted (see \cite[P. 40]{F}). In \cite{Leibman0}, Leibman proved the following conjecture, due to Yuzvinsky, formulated by Hendrick$:$ the multiple recurrence theorem holds true when $\mathcal{T}$ is nilpotent. In \cite{Huang-Shao-Ye}, Huang, Shao and Ye obtained a topological analogue of multiple ergodic averages of weakly mixing minimal systems for nilpotent group actions.

Recently, for $\Z$-systems many researchers studied strong forms of multiple recurrence. They introduced and investigated the $\Delta$-transitivity and $\Delta$-weakly mixing (see \cite{Glasner,B_H,Chen-Li-Lv,Chen-Li-Lv2,Kwietniak-Li-Oprocah-Ye,Kwietniak-Oprocah,Moothathu}). A $\Z$-system $(X,T)$ is \emph{$\Delta$-transitive} if for every integer $d\geq 2$ there exists a residue subset $X_0$ of $X$ such that for each $x\in X_0$, $\{(T^nx, T^{2n}x,\dotsc ,T^{dn}x):n\in\N\}$ is dense in the $d$-th product metric space $X^d$. Glasner showed that for a minimal system, weak mixing implies $\Delta$-transitivity (see \cite{Glasner}). In \cite{Moothathu}, Moothathu proved $\Delta$-transitivity implies weak mixing, but there exist strongly mixing systems which are not $\Delta$-transitive.
A $\Z$-system is called \emph{$\Delta$-weakly mixing} if $(X^m,T^{(m)})$ is $\Delta$-transitive for any $m\in\N$.
In \cite{H-L-Y-Z} Huang, Li, Ye and Zhou studied $\Delta$-transitivity and $\Delta$-weak mixing
and proved that for a $\Z$-system $\Delta$-weakly mixing
is in fact equivalent to $\Delta$-transitivity (see \cite[Proposition 3.2]{H-L-Y-Z}) but it is no longer true for $\Delta$-weakly mixing sets and $\Delta$-transitive sets (see \cite[Remark 3.5]{H-L-Y-Z}).

Inspired by the above ideas and results, we introduce  $\Delta$-transitivity and $\Delta$-weak mixing for a countable torsion-free discrete group $\mathcal{T}$-action.
Recall that a group is called \emph{torsion-free}
if any element has infinite order except the identity element.
Let $(X,\mathcal{T})$ be a $\mathcal{T}$-system,
where $\mathcal{T}$ is a countable torsion-free discrete group,
and $E$ be a closed subset of $X$ with $|E|\geq 2$.
We say that $E$ is a \emph{$\Delta$-transitive subset} of $(X,\mathcal{T})$ provided that there is a residue subset $A$ of $E$ such that for any $x\in A$, $d\geq 1$ and pairwise distinct $T_1,T_2,\dotsc,T_d\in \mathcal{T}\setminus\{\theta_{\mathcal{T}}\}$, the orbit closure of the $d$-tuple $(x,x,\dotsc,x)$ under the action $T_1\times T_2\times\dotsb\times T_d$ contains $E^d$, i.e.
$$\overline {orb_{+}((x,x,\dotsc,x), T_1\times T_2\times\dotsb\times T_d)}\supseteq E^d. $$
where $orb_{+}((x,x,\dotsc,x),T_1\times T_2\times\dotsb\times T_d):=\{(T_1^nx,T_2^nx,\dotsc,T_d^n x)\colon  n\in\N\}$, and
$E^d:=E\times E\times\dotsb\times E$ ($d$ times),
and a \emph{$\Delta$-weakly mixing subset of $(X,\mathcal{T})$} if $E^m$ is a $\Delta$-transitive subset of $(X^m,\mathcal{T})$ for any $m\in\mathbb{N}$.
If $X$ is a $\Delta$-transitive (reps.\ $\Delta$-weakly mixing) subset of $(X,\mathcal{T})$
then we say that the $\mathcal{T}$-system $(X,\mathcal{T})$ is $\Delta$-transitive (reps.\ $\Delta$-weakly mixing). We will show that if $(X,\mathcal{T})$ is $\Delta$-weakly mixing then $X$ is perfect and $(X,\mathcal{T})$ is weakly mixing (see Proposition \ref{weakly mixng}).

It is well known that for a $\Z$-system $(X,T)$, there always  exists a $T$-invariant Borel probability measure on $X$,
whereas for some groups $\mathcal{T}$ there do not exist any invariant Borel probability measures on  a $\mathcal{T}$-system, while the amenability of the acting group $\mathcal{T}$ ensures the existence of invariant Borel probability measures.

For $\Z$-systems,  the variational principle of entropy and the Shannon-McMillan-Breiman (SMB) theorem are important in the study the
entropy theory (see \cite{Parry,Goodman,Goodwyn,Dinaburg}). Comparing to $\Z$-systems,  the study of dynamical systems with amenable group actions lagged behind, while the situation is rapidly changed in recent years, many researchers studied the entropy theory of dynamics with amenable group actions (see \cite{Emerson,Kieffer,Connes-Feldman-Weiss,Ornstein-Weiss,Ward-zhang,Rudolph,Glasner-Thouvenot-Weiss,Danilenko,Lindenstrauss}).
Lindenstrauss and Weiss obtained a generalization of the SMB theorem (see \cite{Lindenstrauss,Weiss}). The variational principle of
entropy also holds for countable discrete amenable groups (see \cite{Ollagnier-Pinchon,Stepin-Tagi-Zade}).
In \cite{H-Y-Z}, Huang, Ye and Zhang established a local variational principle  for the entropy of a given finite open cover of a countable discrete amenable group actions.
Glasner, Thouvenot and Weiss proved an important disjointness theorem which asserts the relative disjointness in the sense of Furstenberg, of zero-entropy extension from completely positive entropy extensions.
An application of this theorem is to obtain that the Pinsker factor of a product system is equal to the product of the Pinsker factor of the component systems (see \cite{Glasner-Thouvenot-Weiss}).
In \cite{Kerr-Li} Kerr and Li showed that for a amenable group action positive entropy indeed implies Li-Yorke chaos.

In \cite{B_H},
Blanchard and Huang introduced a local version of weakly mixing for $\Z$-systems and showed that positive entropy implies the existence
of weakly mixing sets.
Recently, Huang, Li, Ye and Zhou proved that for a $\Z$-system $(X,T)$ positive entropy indeed implies the existence of $\Delta$-weakly mixing sets (see \cite[Theorem C]{H-L-Y-Z}).
In this paper, we generalize this result for dynamical systems of finitely generated torsion-free nilpotent group actions.

\begin{thm}\label{thm-A} Let $(X,\mathcal{T})$ be a $\mathcal{T}$-system, where $\mathcal{T}$ is a finitely generated torsion-free discrete nilpotent group. If $h_{top}(X,\mathcal{T})>0 $ then there exist $\Delta$-weakly mixing subsets of $(X,\mathcal{T})$.
\end{thm}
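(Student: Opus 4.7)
The plan is to adapt the strategy that Huang, Li, Ye and Zhou use for $\Z$-systems (Theorem~C of \cite{H-L-Y-Z}) to the nilpotent setting by combining the amenable local variational principle of Huang-Ye-Zhang, the Glasner-Thouvenot-Weiss disjointness theorem, and Leibman's nilpotent multiple-recurrence theorem. Since a finitely generated torsion-free nilpotent group is amenable, positive topological entropy gives, via \cite{H-Y-Z}, a $\mathcal{T}$-invariant Borel probability measure $\mu$ on $X$ with $h_\mu(X,\mathcal{T})>0$; one may replace $X$ by $\supp\mu$ and assume $\mu$ has full support. Let $\mathcal{P}_\mu$ be the Pinsker $\sigma$-algebra of $(X,\mathcal{B}_\mu,\mu,\mathcal{T})$ and write the disintegration $\mu=\int\mu_y\,d\bar\mu(y)$ over the Pinsker factor $(Y,\nu,\mathcal{T})$. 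Positivity of $h_\mu$ forces $|\supp\mu_y|\geq 2$ on a set of positive $\nu$-measure; the candidate $\Delta$-weakly mixing subset will be $E:=\supp\mu_y$ for a typical $y$.

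The core ergodic-theoretic task is to show, for every $m\in\N$ and every tuple $(T_1,\dots,T_d)$ of pairwise distinct non-identity elements of $\mathcal{T}$, that the $\Z$-action generated by $T_1\times\cdots\times T_d$ on $X^{md}$ has dense orbits from the diagonal in $E^{md}$. The starting point is the relatively independent $(md)$-fold self-joining $\lambda_{md}:=\mu\times_{\mathcal{P}_\mu}\cdots\times_{\mathcal{P}_\mu}\mu$ on $X^{md}$. By Glasner-Thouvenot-Weiss \cite{Glasner-Thouvenot-Weiss}, the extension $X\to Y$ is relatively completely positive entropy, so the diagonal $\mathcal{T}$-action on $(X^{md},\lambda_{md})$ is relatively weakly mixing over $Y$. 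One then combines this relative weak mixing with Leibman's multiple-recurrence theorem \cite{Leibman0} (and, when needed, the topological nilpotent machinery of Huang-Shao-Ye \cite{Huang-Shao-Ye}) to upgrade it to ergodicity/genericity of the single-variable $\Z$-action $n\mapsto T_1^n\times\cdots\times T_d^n$ on each fiber $((\supp\mu_y)^{md},\mu_y^{\otimes md})$ for $\nu$-a.e.\ $y$.

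Once this fiberwise statement is available, a standard Baire category argument closes the proof. Countability of $\mathcal{T}$ yields only countably many tuples $(T_1,\dots,T_d)$ of pairwise distinct non-identity elements, and $E^{md}$ is second countable. For each such tuple, each $m$, and each basic open set $U\subseteq E^{md}$, the set of $(x_1,\dots,x_m)\in E^m$ whose diagonal $T_1\times\cdots\times T_d$-orbit enters $U$ is open in $E^m$ and, by the fiberwise result together with Fubini, dense in $E^m$. Intersecting over these countably many conditions produces a residual $A_m\subseteq E^m$ witnessing $\Delta$-transitivity of $E^m$ in $(X^m,\mathcal{T})$ for every $m$, whence $E$ is $\Delta$-weakly mixing.

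The main obstacle is the upgrade step in the middle paragraph. In the $\Z$-case of \cite{H-L-Y-Z}, one uses that $T^k$ shares the Pinsker $\sigma$-algebra with $T$ for every $k\neq 0$, so the GTW disjointness passes directly to the $\Z$-action generated by $(T^{k_1},\dots,T^{k_d})$. For a nilpotent $\mathcal{T}$, distinct elements $T_i$ may generate cyclic subgroups with genuinely different Pinsker factors, so one cannot invoke GTW for the product action as a black box. Identifying the relatively independent joining $\mu_y^{\otimes d}$ as the characteristic/limit joining for the averages $\frac{1}{|F_N|}\sum_{n\in F_N} f_1(T_1^n\cdot)\cdots f_d(T_d^n\cdot)$, along a F\o lner sequence in $\Z$, is the crux; this is where the nilpotent structure together with torsion-freeness is essential, and also the reason the parallel statement must fail in the solvable example alluded to in the abstract.
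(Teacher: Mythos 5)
Your outline assembles the right ingredients (variational principle, Pinsker factor, Glasner--Thouvenot--Weiss to get a weakly mixing extension, Leibman's nilpotent recurrence), but it leaves the decisive step unproved and, as framed, that step would not go through. The ``upgrade'' you flag as the main obstacle is precisely where the content of the theorem lies, and it is not something to be re-derived from relative weak mixing by a soft argument: it is Leibman's PET-induction result (Corollary~11.7 of \cite{Leibman}, Proposition~\ref{Leibman} in the paper), which says that for a weakly mixing extension $\pi:(X,\mu,\mathcal{T})\to(Y,\nu,\mathcal{T})$ of a finitely generated torsion-free nilpotent group system one has $D\text{-}\lim_n\bigl\{\int\prod_{i=1}^d f_i\circ T_i^n\,d\mu_y-\prod_{i=1}^d\int f_i\circ T_i^n\,d\mu_y\bigr\}=0$ in $L^1(Y)$ for pairwise distinct $T_i$. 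Without citing or proving this, your second paragraph is a restatement of the goal. Moreover the target you set for that step is the wrong one: the map $T_1^n\times\dots\times T_d^n$ does not preserve the fiber $(\supp\mu_y)^{md}$ or the measure $\mu_y^{\otimes md}$ (it sends the fiber over $y$ to a product of fibers over the distinct points $T_1^ny,\dots,T_d^ny$), so ``ergodicity/genericity of the $\Z$-action on each fiber'' is not a meaningful statement, and the almost-everywhere orbit-density claim your Fubini/Baire argument needs does not follow from the $D$-limit alone. Note also that the $D$-limit compares $\int\prod f_i\circ T_i^n\,d\mu_y$ with $\prod\int f_i\,d\mu_{T_i^ny}$, so you additionally need the translated fibers $\mu_{T_i^ny}$ to keep charging the relevant sets; this is exactly what Leibman's multiple recurrence theorem (Lemma~\ref{Leibman2}) supplies, via a positive-lower-density set of times $n$ with $\nu(\Omega_p\cap\bigcap_iT_i^{-n}\Omega_p)>\lambda$, to be intersected with the density-one set of good times from the $D$-limit.

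The paper's proof avoids any a.e.\ genericity statement and does not take $E=\supp\mu_y$. It combines the two ingredients above into Proposition~\ref{PM}: given finitely many sets whose fiber measures are uniformly positive on a positive-measure set of $y$, one finds a single time $L$ and $c>0$ so that all the intersections $A_{s(1)}\cap\bigcap_iT_i^{-L}A_{s(i+1)}$ have fiber measure $>c$ on a positive-measure set. Iterating this produces a nested family of closed sets $A_\sigma$, $\sigma\in\mathscr{E}_k$, with shrinking diameters, whose intersection $A=\bigcap_k\bigcup_\sigma A_\sigma$ is a Cantor set; the finitary characterization of $\Delta$-weak mixing (Proposition~\ref{d-WM}) is then verified directly from the combinatorics of the indices. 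If you want to salvage your outline, replace the middle paragraph by an appeal to Proposition~\ref{Leibman} and Lemma~\ref{Leibman2}, and replace the Baire-category endgame on $\supp\mu_y$ by the nested Cantor construction together with Proposition~\ref{d-WM}.
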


It should be noticed that after modifying the example introduced by Furstenberg in \cite[P.40]{F} we construct a dynamical system $(Z,F)$ with positive topological entropy, where $Z$ is a compact metric space and $F$ is a finitely generated torsion-free discrete solvable group, such that there is no $\Delta$-transitive subsets for $(Z,F)$
(see Proposition \ref{solvable2}).

Following the idea of Theorem A in \cite{H-L-Y-Z},
we also give an equivalent characterization for $\Delta$-weakly mixing subsets as follows.
\begin{thm}\label{thm-B} Let $(X,\mathcal{T})$ be a $\mathcal{T}$-system, where $\mathcal{T}$ is a countable torsion-free discrete group. If $E$ is a closed subset of $X$ with $|E|\geqslant 2$, then $E$ is a $\Delta$- weakly mixing subset of $(X,\mathcal{T})$ if and only if $E$ is perfect and there exists an increasing sequence of Cantor sets $C_1\subset C_2\subset\dotsc $ of $E$ such that $C=\bigcup_{i=1}^{\infty}C_i$ is dense in $E$, and has the following property: for any subset $A$ of $C$, $d\in \mathbb{N}$, pairwise distinct $T_1,T_2,..,T_d\in \mathcal{T}\setminus \{ \theta_{\mathcal{T}}\}$ and any continuous functions $g_j: A\to E$ for $j=1,2,\dotsc ,d$ there exists an increasing sequence $\{q_k\}_{k=1}^{\infty}$ of positive integers such that
$$\lim_{k\rightarrow \infty }T_j^{q_k}x=g_j(x)$$
for every $x\in A$ and $j=1,2,\dotsc ,d$.
\end{thm}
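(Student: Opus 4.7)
My plan is to treat the two implications separately; the ``if'' direction is a Baire category argument once the finite-interpolation content of the hypothesis is extracted, while the ``only if'' direction is a Mycielski-style Cantor tree construction driven by $\Delta$-weak mixing, which is where the main technical work lies.

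For ``if'', assume $E$ is perfect and $C=\bigcup_i C_i$ is as in the statement. Fix $m\in\N$; I show $E^m$ is $\Delta$-transitive in $(X^m,\mathcal{T})$. For each $d\ge 1$, each $d$-tuple of pairwise distinct $T_1,\dotsc,T_d\in \mathcal{T}\setminus\{\theta_{\mathcal{T}}\}$, and each $V$ from a fixed countable base of $E^{md}$, the set
$$W_{T_1,\dotsc,T_d,V}:=\bigcup_{n\in\N}\bigl\{(x_1,\dotsc,x_m)\in E^m : (T_1^n x_1,\dotsc,T_1^n x_m,\dotsc,T_d^n x_1,\dotsc,T_d^n x_m)\in V\bigr\}$$
is open in $E^m$. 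To establish its density, I approximate an arbitrary $(x_1,\dotsc,x_m)\in E^m$ by pairwise distinct points $x_i'\in C$ (using perfectness of $E$ and density of $C$), fix any target $(y_{i,j})\in V$, and define $g_j$ on the finite discrete set $\{x_1',\dotsc,x_m'\}$ by $g_j(x_i'):=y_{i,j}$ (automatically continuous); the interpolation hypothesis then supplies $q_l$ with $T_j^{q_l}x_i'\to y_{i,j}$, so $(x_1',\dotsc,x_m')\in W_{T_1,\dotsc,T_d,V}$ for $l$ large. Baire's theorem applied to the countable collection of these dense open sets yields a dense $G_\delta$ subset of $E^m$ on which $E^m$ satisfies the required $\Delta$-transitive orbit condition in $(X^m,\mathcal{T})$.

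For ``only if'', assume $E$ is $\Delta$-weakly mixing; perfectness of $E$ is given by Proposition \ref{weakly mixng}. I build the $C_k$ via an inductive dyadic Mycielski-type refinement. Fix a countable base $\mathcal{V}$ of $E$, a countable dense subset $\mathcal{D}\subset E$, and an enumeration $\{\omega_k\}_{k\geq 1}$ of all tuples
$$\omega=\bigl(d,\,T_1,\dotsc,T_d,\,N,\,(y_{i,j})_{1\le i\le N,\,1\le j\le d},\,\varepsilon\bigr)$$
with the $T_j\in\mathcal{T}\setminus\{\theta_{\mathcal{T}}\}$ pairwise distinct, $y_{i,j}\in\mathcal{D}$, and rational $\varepsilon>0$. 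At stage $k$, with $C_{k-1}$ presented as a finite disjoint union of closed balls of diameter $\le 2^{-(k-1)}$, I select $N_k$ of these balls and pick a representative $x_i$ in each, drawn from a dense $G_\delta$ residual of $E^{N_k}$ on which the $\Delta$-transitive density condition for the $T_j$ in $\omega_k$ holds (such a residual intersects every product of basic balls by density); then $\Delta$-weak mixing furnishes an iterate $n_k$ with $\rho(T_j^{n_k}x_i,y_{i,j})<\varepsilon_k/2$ for the data in $\omega_k$. Continuity of each $T_j^{n_k}$ permits further shrinking the chosen balls so that the same estimate with constant $\varepsilon_k$ persists uniformly over the refined sub-ball, and a parallel refinement of the remaining balls together with interleaving the enumeration of $\mathcal{V}$ ensures that $C=\bigcup_k C_k$ is a dense union of Cantor sets in $E$.

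To verify the interpolation property, fix arbitrary $A\subset C$, pairwise distinct $T_1,\dotsc,T_d$, and continuous $g_j\colon A\to E$. For each $l\in\N$ I choose a finite $2^{-l}$-net $F_l\subset A$ (possible since $A$ is totally bounded) and approximate each $g_j|_{F_l}$ within $2^{-l}$ by values in $\mathcal{D}$; the resulting data appears in the enumeration as some $\omega_{k(l)}$ with $\varepsilon_{k(l)}\le 2^{-l}$ and $k(l)\ge l$, so the iterate $q_l:=n_{k(l)}$ from the construction satisfies $\rho(T_j^{q_l}x,g_j(x))<3\cdot 2^{-l}$ on $F_l$. The uniform-on-balls estimate from the construction, combined with continuity of $g_j$ and the fact that $F_l$ is $2^{-l}$-dense in $A$, propagates this inequality to all of $A$, and a diagonal extraction produces the required increasing sequence $\{q_k\}$ with $T_j^{q_k}x\to g_j(x)$ for every $x\in A$ and $j$. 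The main technical obstacle is the calibration of the ball shrinkings at each stage: one must ensure that each $T_j^{n_k}$ oscillates by less than $\varepsilon_k$ over every level-$k$ ball, uniformly enough that the pointwise limit survives for the possibly uncountable $A$ and arbitrary continuous $g_j$ chosen only after the construction is complete.
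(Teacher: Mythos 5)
Your ``if'' direction is essentially sound (it runs a direct Baire-category argument in $E^m$ rather than routing through the paper's Proposition \ref{d-WM}, but the finite-interpolation step that drives density is the same as the paper's sufficiency proof; just make sure the basic sets $V$ are taken open in $X^{md}$ and merely required to meet $E^{md}$, since the orbit points $(T_1^nx_1,\dotsc,T_d^nx_m)$ need not lie in $E^{md}$). The genuine gap is in the verification of the interpolation property in the ``only if'' direction. You fix a $2^{-l}$-net $F_l\subset A$, encode $g_j|_{F_l}$ (up to $2^{-l}$, with targets in $\mathcal{D}$) as a tuple $\omega_{k(l)}$, and then claim that the estimate $\rho(T_j^{q_l}x,g_j(x))<3\cdot 2^{-l}$, valid on $F_l$, ``propagates to all of $A$'' via the uniform-on-balls oscillation bound and continuity of $g_j$. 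It does not: for $x\in A\setminus F_l$ and $x'\in F_l$ with $\rho(x,x')<2^{-l}$, continuity gives $g_j(x)\approx g_j(x')$, but to conclude $T_j^{q_l}x\approx T_j^{q_l}x'$ you need $x$ and $x'$ to lie in the \emph{same} level-$k(l)$ ball, and those balls have diameter at most $2^{-k(l)}\le 2^{-l}$, so two points of $A$ within $2^{-l}$ of each other will typically sit in different balls. The ball containing $x$ then carries whatever target $\omega_{k(l)}$ assigned to it --- a target determined without reference to $g_j$ near $x$, since that ball may contain no point of $F_l$ --- and the estimate for $x$ says nothing about $g_j(x)$. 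There are two further bookkeeping problems feeding into this: your enumeration $\{\omega_k\}$ is fixed in advance, yet each $\omega_k$ must name specific balls of $C_{k-1}$, which only come into existence during the induction; and at stage $k$ you only control $N_k$ selected balls and their chosen representatives, whereas the final property must hold uniformly over every ball of every level that happens to meet the (unknown, possibly uncountable, not necessarily closed) set $A$. You correctly flag this calibration as ``the main technical obstacle'' in your last sentence, but the proposal does not overcome it.

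The paper's proof closes exactly this gap by changing where the quantifiers sit. It defines $C_k$ to be \emph{$(\varepsilon,T_1,\dotsc,T_d)$-spread in $E$}: there is one finite cover of $C_k$ by $\delta$-balls $B(z_i,\delta)$ such that for \emph{every} assignment $g_j\colon\{z_1,\dotsc,z_m\}\to E$ (all of $E$, not a countable dense subset fixed beforehand) there is a single time $L$ with $T_j^LB(z_i,\delta)\subset B(g_j(z_i),\varepsilon)$. Theorem \ref{H(E)} shows the collection $\mathcal{H}(E)$ of sets spread for all parameters is a hereditary residual subset of $2^E$ --- the uncountably many target assignments are tamed there by a finite $\tfrac{\varepsilon}{2}$-net of $E$ and a finite nested shrinking of neighborhoods --- and Lemma \ref{K_M} (Kuratowski--Mycielski) then produces the Cantor sets $C_i\in\mathcal{H}(E)$. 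In the verification one assigns to \emph{each} covering ball the value $g_j(u_i^k)$ of a point $u_i^k\in A\cap B(z_i^k,\delta_k)$ chosen \emph{inside that ball}, so every $x\in A$ shares a ball with the point whose $g_j$-value is its target; pointwise continuity of $g_j$ at $x$ then finishes the argument with no uniformity needed. If you want to keep a direct inductive construction, you must build this ``spread'' feature (a single time working for all target assignments to all balls of a level) into each stage; approximating $g_j$ only on a net of $A$ chosen after the fact cannot recover it.
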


The term "chaos" in connection with a map was introduced by Li and York and proved its value for interval maps (see \cite{Li-Yorke}). In \cite{B-G-K-M}, the authors considered the Li-Yorke definition of chaos in the setting of general topological dynamics $(X,T)$ and proved that positive entropy implies Li-Yorke chaos. In sofic context, positive topological entropy with respect to some sofic approximation sequence implies Li-Yorke chaos (see \cite{Kerr-Li2}). Comparing  with the Li-York chaos we have the following definition of chaos.

We say that a $\mathcal{T}$-dynamical system $(X,\mathcal{T})$ is \emph{asynchronous chaotic} if there exists an increasing sequence of Cantor sets $C_1\subset C_2\subset\dotsc $ of $X$ and $\delta>0$  such that for any distinct $x,y\in C:=\bigcup_{i=1}^{\infty}C_i$  and  $T_1,T_2\in\mathcal{T}\setminus\{\theta_{\mathcal{T}}\}$,
$$\liminf_{n\to\infty}\rho(T_1^nx,T_2^ny)=0, \ \limsup_{n\to\infty}\rho(T_1^nx,T_2^ny)\geq\delta,$$
such $C$ is called a \emph{asynchronous chaotic set}.

Following Theorem \ref{thm-A} and Theorem \ref{thm-B}, we have the following corollary.
\begin{cor}\label{corollary} Let $(X,\mathcal{T})$ be a $\mathcal{T}$-system, where $\mathcal{T}$ is a finitely generated torsion-free discrete nilpotent group. If $h_{top}(X,\mathcal{T})>0 $, then $(X,\mathcal{T})$ is asynchronous chaotic.
\end{cor}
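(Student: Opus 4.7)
The plan is to combine Theorem \ref{thm-A} and Theorem \ref{thm-B} and read off the asynchronous chaotic set from the Cantor sets they produce. Since $h_{top}(X,\mathcal{T})>0$, Theorem \ref{thm-A} furnishes a $\Delta$-weakly mixing subset $E\subseteq X$; applying Theorem \ref{thm-B} to $E$ then yields an increasing sequence of Cantor sets $C_1\subset C_2\subset\cdots\subset E$ whose union $C=\bigcup_{i\geq 1}C_i$ is dense in $E$ and carries the strong orbit-prescription property stated there. Since each $C_i$ is a Cantor set in $E$ and $E$ is closed in $X$, each $C_i$ is also a Cantor set in $X$, so this sequence is the natural candidate for the Cantor sets in the definition of asynchronous chaos.

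Because $E$ is perfect with $|E|\geq 2$, I fix two distinct points $u,v\in E$ once and for all and set $\delta:=\rho(u,v)/2>0$. The task reduces to verifying, for every pair of distinct $x,y\in C$ and every $T_1,T_2\in\mathcal{T}\setminus\{\theta_{\mathcal{T}}\}$, the two conditions $\liminf_{n}\rho(T_1^nx,T_2^ny)=0$ and $\limsup_{n}\rho(T_1^nx,T_2^ny)\geq\delta$. In each case I apply Theorem \ref{thm-B} to the two-point set $A=\{x,y\}$, which inherits the discrete topology from $C$ so that every function $A\to E$ is automatically continuous. For the $\liminf$, fix any $z\in E$ and prescribe the constant targets $g_j\equiv z$, using $d=2$ with the pair $T_1,T_2$ when they are distinct and $d=1$ with $T_1$ alone when $T_1=T_2$; the resulting sequence of positive integers $\{q_k\}$ drives both $T_1^{q_k}x$ and $T_2^{q_k}y$ to $z$, so $\rho(T_1^{q_k}x,T_2^{q_k}y)\to 0$ and the liminf along $n$ vanishes. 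For the $\limsup$ I use the same dichotomy on $d$ but prescribe $g_1(x)=u$ and $g_2(y)=v$ (with arbitrary values at the remaining points); then $T_1^{q_k}x\to u$ and $T_2^{q_k}y\to v$, forcing $\rho(T_1^{q_k}x,T_2^{q_k}y)\to\rho(u,v)=2\delta$, which lower-bounds the limsup by $2\delta\geq\delta$.

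The only mild obstacle is the need to split according to whether $T_1=T_2$, because the pairwise-distinctness hypothesis in Theorem \ref{thm-B} forbids using $d=2$ with a repeated map; reducing to $d=1$ in the coincident case handles this uniformly. Beyond that, Theorem \ref{thm-B} is strong enough to produce each required orbit prescription in a single application, and the same Cantor sets $C_i$ together with $\delta$ exhibit $(X,\mathcal{T})$ as asynchronous chaotic with chaotic set $C$.
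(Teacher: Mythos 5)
Your proposal is correct and follows essentially the same route as the paper: invoke Theorem \ref{thm-A} to get a $\Delta$-weakly mixing set $E$, apply Theorem \ref{thm-B} to obtain the Cantor sets $C_i$ with dense union $C$, and then realize the $\liminf$ and $\limsup$ conditions by prescribing target functions on the two-point set $\{x,y\}$, splitting into the cases $T_1=T_2$ (so $d=1$) and $T_1\neq T_2$ (so $d=2$) exactly as the paper does. The only cosmetic difference is your choice $\delta=\rho(u,v)/2$ versus the paper's $\delta=\rho(e_1,e_2)$, which is immaterial.
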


For a countable torsion-free discrete amenable group actions, we do not know whether Corollary \ref{corollary} still holds. More precisely, assume that $\mathcal{T}$ be a countable torsion-free discrete amenable group and $(X,\mathcal{T})$ is a $\mathcal{T}$-system with $h_{top}(X,\mathcal{T})>0$, is $(X,\mathcal{T})$ asynchronous chaotic?

This paper is organized as follows. In Section 2, we recall some basic concepts and useful properties. In Section 3, we will present some properties of $\Delta$-weakly mixing subsets of a $\mathcal{T}$-system. In Section 4, we will present some concepts and properties of nilpotent group, then give the proof of Theorem \ref{thm-A}. Finally, we will prove Theorem \ref{thm-B} and Corollary 1.3 in Section 5.

\section{Preliminaries}
In this section, we will review the hyperspace $2^X$ of a compact metric space $X$ with the Hausdorff metric, density of subsets of non-negative integers, extension, entropy of an amenable group action. We also present some basic results which will used later.
\subsection{Hyperspace of space}
For a compact metric space $X$ with a metric $\rho$, the Hausdorff metric of two non-empty compact subsets $A,B$ of $X$ is defined as:
$$\rho_{H}(A,B)=\max\{\max_{x\in A}\min_{y\in B}\rho(x,y), \ \max_{y\in B}\min_{x\in A}\rho(x,y) \}.$$
The metric space $(2^X,\rho_H)$ (\emph{hyperspace of $X$}) is compact since $(X,\rho)$ is compact, where $2^X$ is the collection of all non-empty compact subsets of $X$.
For non-empty open subsets $U_1,U_2,\dotsc ,U_n$ of $X$, let
$$ \langle U_1,U_2,\dotsc ,U_n\rangle :=
\biggl\{A\in 2^X \colon A\subset \bigcup_{i=1}^{n}U_i \text{ and  } A \cap U_i \neq \emptyset,\  i= 1,2,\dotsc ,n \biggr\}. $$
Collection of those $ \langle U_1,U_2,\dotsc ,U_n \rangle $ form a basis for the Hausdorff topology of $2^X$ induced by $\rho_{H}$, where $U_1,U_2,\dotsc ,U_n$ are non-empty open subsets of $X$.

A subset $Q$ of $2^X$ is called \emph{hereditary} if $2^A\subset Q$ for every set $A\in Q$. For a hereditary subset of $2^X$, there is a consequence of the Kuratowski-Mycielski Theorem (\cite[Theorem 5.10]{Akin}).
\begin{lem}\label{K_M} Suppose that $X$ is a perfect compact metric space. If a hereditary subset $Q$ of $2^X$ is residual, then there exists an increasing sequence of Cantor subsets $C_1 \subset C_2\subset \dotsb $ of $X$  such that $C_i\in Q$ for every $i\geq 1$ and $C=\bigcup_{i=1}^\infty C_i$ is dense in $X$.
\end{lem}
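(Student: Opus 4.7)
This lemma is Akin's form of the Kuratowski--Mycielski theorem, cited as [Akin, Theorem 5.10], so the plan is to sketch the standard argument in a way that emphasizes where each hypothesis is used. The strategy splits into two parts. First I would produce a single Cantor set $C\subseteq X$ with $C\in Q$ and $C$ dense in $X$. Then I would carve the increasing sequence $C_1\subset C_2\subset\cdots$ inside $C$ using only the hereditary property of $Q$.

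For the first part, I would write $Q\supseteq\bigcap_{n\geq 1}G_n$ with each $G_n$ open dense in $2^X$, fix a countable basis $\{V_n\}_{n\geq 1}$ of non-empty open sets of $X$, and inductively build a dyadic scheme $\{B_\sigma\}_{\sigma\in\{0,1\}^{<\N}}$ of non-empty closed balls in $X$ with $B_{\sigma 0}\cup B_{\sigma 1}\subset B_\sigma$ disjointly and $\mathrm{diam}(B_\sigma)\to 0$. At level $n$ I would impose the additional conditions that some $B_\sigma$ with $|\sigma|=n$ is contained in $V_n$, which is possible because $X$ is perfect so each $V_n$ contains two disjoint small closed balls; and that there exists a set $A_n\in Q$ with $A_n\subseteq\bigcup_{|\sigma|=n}B_\sigma$ and $A_n\cap B_\sigma\neq\emptyset$ for every such $\sigma$, which can be arranged because $Q$, being residual, is dense in $2^X$, so it meets the basic open set $\langle B_\sigma:|\sigma|=n\rangle$. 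The resulting $C=\bigcap_n\bigcup_{|\sigma|=n}B_\sigma$ is a Cantor set, is dense in $X$ by the first condition, and sits at bounded Hausdorff distance from each $A_n$ thanks to the shrinking diameters; after refining the scheme at level $n$ so that $\bigcup_{|\sigma|=n}B_\sigma$ is a Hausdorff-small neighborhood of $A_n$ lying inside the open set $G_n$, openness of $G_n$ forces $C\in G_n$ for every $n$, hence $C\in Q$.

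For the second part, enumerate a countable dense subset $\{c_k\}_{k\geq 1}$ of $C$. Since $C$ is a Cantor set, every non-empty relatively open subset of $C$ is again a Cantor set. I would let $C_1$ be any small Cantor subset of $C$ containing $c_1$, and given $C_{n-1}$, define $C_n$ as the disjoint union of $C_{n-1}$ with a Cantor subset of $C$ drawn from a relatively open neighborhood in $C$ of each $c_i$, $i\leq n$, chosen disjoint from $C_{n-1}$. Then $C_n$ is a Cantor set containing $\{c_1,\ldots,c_n\}\cup C_{n-1}$; because $C_n\subseteq C\in Q$ and $Q$ is hereditary, $C_n\in Q$; and $\bigcup_n C_n\supseteq\{c_k\}_{k\geq 1}$ is dense in $C$, hence dense in $X$.

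The main obstacle is the Cantor scheme construction in the first part, specifically the simultaneous balancing at each level of three competing requirements---hitting the new basic open set $V_n$, shrinking diameters so that the limit is a Cantor set, and nesting inside a Hausdorff-small neighborhood of an element $A_n\in Q$ so that openness of each $G_n$ transfers membership from the approximations to the limit $C$. This interplay between density, openness of the $G_n$, and heredity of $Q$ is the technical heart of the Kuratowski--Mycielski argument.
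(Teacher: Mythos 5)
The paper itself does not prove this lemma; it simply quotes it as a consequence of the Kuratowski--Mycielski theorem \cite[Theorem 5.10]{Akin}, so your attempt has to stand on its own. It does not: the first half of your plan is impossible. A Cantor set is compact, hence closed in $X$, so a Cantor set that is dense in $X$ must equal $X$; since $X$ is only assumed to be a perfect compact metric space (e.g.\ $X=[0,1]$, which is connected and therefore not homeomorphic to a Cantor set), no single Cantor subset $C\in Q$ can be dense in $X$. You can see the failure inside your own construction: once the dyadic scheme is fixed at level $1$, every later $B_\sigma$ lies in $B_0\cup B_1$, a union of two small disjoint closed balls, so the limit set $C=\bigcap_n\bigcup_{|\sigma|=n}B_\sigma$ is contained in a proper closed subset of $X$ and cannot meet $V_m$ for most $m$; the requirements that some level-$n$ ball lie in $V_n$ for every $n$ and that the scheme be nested, disjoint, with shrinking diameters are mutually inconsistent. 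Consequently the second half, which carves the sequence $C_1\subset C_2\subset\cdots$ out of this nonexistent dense Cantor set, has nothing to work with.

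The fix is structural, and it is precisely why the statement is phrased as an increasing union: density must be achieved by $\bigcup_{i}C_i$, not by any single $C_i$. In the actual Kuratowski--Mycielski construction the scheme is not a single binary tree but a forest that acquires a new root inside each basic open set $V_n$ at stage $n$; $C_i$ is the set of branch points descending from the roots already present at stage $i$. This makes each $C_i$ a Cantor set meeting only $V_1,\dots,V_i$, gives $C_i\subset C_{i+1}$ automatically, and lets the union be dense while each $C_i$ stays compact and nowhere dense. Membership $C_i\in Q$ is then obtained, much as you intended, by trapping the stage-$k$ configurations in basic Hausdorff-metric neighborhoods of elements of the dense open sets $G_m$ (doing this for the relevant subcollections of roots, and invoking heredity of $Q$ at the end); your use of the openness of the $G_n$ and of heredity is the right ingredient, but it has to be grafted onto the forest construction rather than onto a single dense Cantor set.
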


\subsection{Density of subsets of non-negative integers}
Let $\Z$, $\Z_+$ and $\N$ denote the collection of
all integers, non-negative integers and positive integers respectively.
The \emph{lower density} and \emph{upper density} of a subset $F\subseteq \Z+$ is defined respectively by
$$\underline D(F)=\liminf_{n\to\infty}\frac{|F\cap \{0,1,\dotsc ,n-1 \}|}{n}$$
and
$$\overline D(F)=\limsup_{n\to\infty}\frac{|F\cap \{0,1,..,n-1\}|}{n}. $$
We say that $F$ has density $D(F)$ if $\underline D(F)=\overline D(F)$, where $D(F)$ denote this common value.
There is a simple fact that we will use in the Section 4: for a real sequence $\{a_n\} _{n=0}^{\infty}$ with $0\leq a_n \leq M$ for some positive real number $M$ if
$$\liminf_{N\to \infty}\frac{1}{N}\sum_{i=0}^{N-1}a_n>0$$
then $\underline D(E)>0$ where $E:=\{n\in \Z_+\colon a_n>0\}$.

\subsection{Entropy of an amenable group action}
A countable discrete group $\mathcal{T}$ is called \emph{amenable} if there exists a sequence of non-empty finite subsets
$\{F_n\}_{n=1}^{+\infty}$ of $\mathcal{T}$ such that
$$\lim_{n\to +\infty}\frac{|TF_n\Delta F_n|}{|F_n|}=0$$
holds for every $T\in \mathcal{T}$,
and such $\{F_n\}_{n=1}^{+\infty} $ is called a \emph{F\o{}lner  sequence} of $\mathcal{T}$.
We know that finite groups, solvable groups and finitely generated groups of subexponential growth are all amenable groups.

Let $(X,\mathcal{T})$ be a $\mathcal{T}$-system, where $\mathcal{T}$ is a countable discrete amenable group with F\o{}lner sequence $\{F_n\}_{n=1}^{+\infty}$. A finite cover of $X$ is a finite family of Borel subsets of $X$, whose union is $X$. Denote by $\mathcal{C}_X^o$ the collection of all open finite covers of $X$. Let $\mathcal{U}\in \mathcal{C}_X^o$.
We set $N(\mathcal{U})$ to be the minimum among the cardinalities of all sub-families of $\mathcal{U}$ covering $X$ and define $H(\mathcal{U})=\log N(\mathcal{U})$. For a finite non-empty subset $\mathcal{S}$ of $\mathcal{T}$ and $\mathcal{U}\in \mathcal{C}_X^o$, set
\[\mathcal{U}_\mathcal{S}=
\bigvee_{T\in \mathcal{S}}T^{-1}\mathcal{U}=
\biggl\{\bigcap_{T\in \mathcal{S}}T^{-1}U_T \mid  U_T\in \mathcal{U}\biggr\}.\]
The \emph{topological entropy of $\mathcal{U}$},
$$h_{top}(\mathcal{T},\mathcal{U})=\lim_{n\to\infty}\frac{1}{|F_n|}H(\mathcal{U}_{F_n})$$
exists and is independent of the F\o{}lner sequence (see \cite[Theorem 6.1]{Lindenstrauss-Weiss}).
The \emph{topological entropy  of $(X,\mathcal{T})$} is defined by
$$h_{top}(X,\mathcal{T}):=\sup_{\mathcal{U}\in \mathcal{C}_X^o}h_{top}(\mathcal{T},\mathcal{U}). $$

Denote by $\mathcal{M}(X)$ the set of all Borel probability measures on $X$. $\mu\in\mathcal{M}(X)$ is called \emph{$\mathcal{T}$-invariant} if $T\mu=\mu$ for each $T\in\mathcal{T}$.
Denote by $\mathcal{M}(X,\mathcal{T})$ the set of all $\mathcal{T}$-invariant elements in $\mathcal{M}(X)$. $\mu\in\mathcal{M}(X,\mathcal{T})$ is called \emph{ergodic} if $\mu(\bigcup_{T\in\mathcal{T}}TA)=0$ or $1$ for any $A\in \mathcal{B}_X$.
Denote by $\mathcal{M}^e(X,\mathcal{T})$ the set of all ergodic elements in $\mathcal{M}(X,\mathcal{T})$.
When the acting group $\mathcal{T}$ is amenable, $\mathcal{M}(X,\mathcal{T})\neq\emptyset$ and $\mathcal{M}(X)$, $\mathcal{M}(X,\mathcal{T})$ are convex compact metric spaces with weak$^*$-topology.

A partition of $X$ is a cover of $X$, whose elements are pairwise disjoint. Denote by $\mathcal{P}_X$ the set of all finite Borel partitions of $X$. Given $\alpha\in \mathcal{P}_X$, a $\mathcal{T}$-invariant sub-$\sigma$-algebra $\mathcal{A}\subseteq\mathcal{B}_X$ and $\mu\in\mathcal{M}(X)$, define
$$ H_{\mu}(\alpha|\mathcal{A})=\sum_{A\in \alpha}\int E(1_A|\mathcal{A})\log E(1_A|\mathcal{A})d\mu,$$
where $E(1_A|\mathcal{A})$ is the expectation of $1_A$ with respect to $\mathcal{A}$. Define
$$h_{\mu}(\mathcal{T},\alpha|\mathcal{A})=\lim_{n\to \infty}\frac{1}{|F_n|}H_{\mu}(\alpha_{F_n}|\mathcal{A}).$$
Once again one can deduce the existence of this limit and its independence of the sequence $\{ F_n\}_{n=1}^{+\infty}$
(see \cite{Kammeyer-Rudolph,Ornstein-Weiss}). When $\mathcal{A}=\{\emptyset,X\}$, we write $h_{\mu}(T,\alpha|\{\emptyset,X\})$ as $h_{\mu}(T,\alpha)$.
The \emph{measure-theoretic entropy of $(X,\mathcal{T},\mu)$} is defined by
$$h_{\mu}(X,\mathcal{T})=\sup_{\alpha\in\mathcal{P}_X}h_{\mu}(\mathcal{T},\alpha).$$

The variational principle between topological entropy and measure-theoretic entropy also holds for countable infinite discrete amenable group actions (see \cite{Ollagnier-Pinchon,Stepin-Tagi-Zade}):
$$h_{top}(X,\mathcal{T})=\sup_{\mu\in \mathcal{M}(X,\mathcal{T})}h_{\mu}(X,\mathcal{T})=\sup_{\mu\in \mathcal{M}^e(X,\mathcal{T})}h_{\mu}(X,\mathcal{T}).$$

For a $\mathcal{T}$-system, one has the following property (see \cite[Lemma 2.4]{H-Y-Z} or \cite[Theorem 6.1]{Lindenstrauss-Weiss}).

\begin{prop}\label{Entropy} Let $(X,\mathcal{T})$ be a $\mathcal{T}$-system, where $\mathcal{T}$ is a countable infinite discrete amenable group. Then for any $\alpha \in \mathcal{P}_X$ and $\mu\in \mathcal{M}(X,\mathcal{T})$, we have $$h_{\mu}(\mathcal{T},\alpha)=\inf_{\mathcal{F}\subset \mathcal{T},|\mathcal{F}|<\infty}\frac{H_{\mu}(\alpha_F)}{|F|}. $$
\end{prop}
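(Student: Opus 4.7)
My plan is to identify Proposition \ref{Entropy} as a direct application of the Ornstein--Weiss subadditive convergence theorem. The paper has already recorded that for any finite partition $\alpha$ the quantity $h_\mu(\mathcal{T},\alpha)=\lim_{n\to\infty}|F_n|^{-1}H_\mu(\alpha_{F_n})$ exists and is independent of the F\o{}lner sequence, so the only substantive content of the proposition is the equality $\lim=\inf$. I would therefore set $f(E):=H_\mu(\alpha_E)$ for each nonempty finite $E\subset\mathcal{T}$ and verify that $f$ satisfies the monotonicity, subadditivity, and invariance hypotheses needed to invoke the Ornstein--Weiss lemma in the form $\lim_n f(F_n)/|F_n|=\inf_E f(E)/|E|$.

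The three properties are quickly verified. Monotonicity is immediate: if $E\subseteq F$ then $\alpha_E$ is coarser than $\alpha_F$, so $H_\mu(\alpha_E)\le H_\mu(\alpha_F)$. Subadditivity follows from the standard inequality $H_\mu(\beta\vee\gamma)\le H_\mu(\beta)+H_\mu(\gamma)$ together with the identity $\alpha_{E\cup F}=\alpha_E\vee\alpha_F$. For right-invariance, the key calculation is
$$\alpha_{Es}=\bigvee_{T\in E}(Ts)^{-1}\alpha=\bigvee_{T\in E}s^{-1}T^{-1}\alpha=s^{-1}\Bigl(\bigvee_{T\in E}T^{-1}\alpha\Bigr)=s^{-1}\alpha_E,$$
and the $\mathcal{T}$-invariance of $\mu$ then gives $H_\mu(\alpha_{Es})=H_\mu(s^{-1}\alpha_E)=H_\mu(\alpha_E)$.

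With these three conditions in hand the Ornstein--Weiss theorem produces $\lim_n f(F_n)/|F_n|=\inf_E f(E)/|E|$, which after unraveling definitions is exactly the claimed equality. The easy direction $\liminf_n f(F_n)/|F_n|\ge\inf_E f(E)/|E|$ is just the observation that every term is bounded below by the infimum; the reverse inequality is the substantive part, and rests on the Ornstein--Weiss quasi-tiling machinery for amenable groups. I would not attempt to reprove this, instead citing \cite{Kammeyer-Rudolph,Ornstein-Weiss,Lindenstrauss-Weiss} as the paper already does. The only subtle point I anticipate is bookkeeping: in the non-abelian setting one must use \emph{right}-translation invariance rather than left-translation invariance, since the analogous left-translation formula $\alpha_{sE}=\bigvee_{T'\in E}(T')^{-1}(s^{-1}\alpha)$ does not collapse to a translate of $\alpha_E$ when $s$ fails to commute with the elements of $E$. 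Aligning the invariance side of the Ornstein--Weiss lemma with the (left) F\o{}lner condition used in the paper is therefore the main place where care is required, but both conventions are available in the standard references and the argument goes through unchanged.
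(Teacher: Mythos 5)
The paper offers no proof of this proposition, only citations to \cite[Lemma 2.4]{H-Y-Z} and \cite[Theorem 6.1]{Lindenstrauss-Weiss}, so the comparison is with those sources. Your verifications of monotonicity, subadditivity and right-invariance are all correct, but the lemma you then invoke does not exist in the form you state it: the Ornstein--Weiss convergence theorem for monotone, subadditive, right-invariant set functions (this is what \cite[Theorem 6.1]{Lindenstrauss-Weiss} actually says) asserts only that $f(F_n)/|F_n|$ converges to a limit independent of the F\o{}lner sequence; it does \emph{not} identify that limit with $\inf_E f(E)/|E|$ over all finite $E$, and the ``infimum rule'' genuinely fails for functions satisfying only those three hypotheses (this is the point of Downarowicz--Frej--Romagnoli, \emph{Shearer's inequality and infimum rule for Shannon entropy and topological entropy}). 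You can see where the argument breaks by attempting the hard inequality $\lim_n f(F_n)/|F_n|\le f(E)/|E|$ via a covering of $F_n$ by right translates $Es$: those translates cover the F\o{}lner-interior of $F_n$ with multiplicity $|E|$, and plain subadditivity only bounds $f(F_n)$ by the full sum $\sum_s f(Es)$, which loses exactly the factor $1/|E|$ you need.

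What rescues the statement for Shannon entropy is that $f(E)=H_\mu(\alpha_E)$ is \emph{strongly} subadditive, $H_\mu(\alpha_{E\cup F})+H_\mu(\alpha_{E\cap F})\le H_\mu(\alpha_E)+H_\mu(\alpha_F)$, which follows from
\begin{equation*}
H_\mu(\alpha_{E\cup F})-H_\mu(\alpha_F)=H_\mu(\alpha_{E\setminus F}\mid\alpha_F)\le H_\mu(\alpha_{E\setminus F}\mid\alpha_{E\cap F})=H_\mu(\alpha_E)-H_\mu(\alpha_{E\cap F}),
\end{equation*}
and which is equivalent to Shearer's inequality: if a finite family of finite sets covers every point of $F$ at least $k$ times, then $H_\mu(\alpha_F)\le\frac{1}{k}\sum_K H_\mu(\alpha_K)$. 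With this, the covering argument does close: the family $\{Es\cap F_n\}_{s\in E^{-1}F_n}$ covers each point of $F_n$ exactly $|E|$ times, the at most $|F_n|$ indices with $Es\subseteq F_n$ each contribute $H_\mu(\alpha_E)$ by your (correct) right-invariance computation, and the remaining $o(|F_n|)$ boundary indices contribute a bounded amount each by the F\o{}lner condition, giving $H_\mu(\alpha_{F_n})/|F_n|\le H_\mu(\alpha_E)/|E|+o(1)$. This strongly subadditive version of the convergence lemma is what \cite[Lemma 2.4]{H-Y-Z} actually supplies, so your proof is repaired by replacing the plain subadditivity check with the strong one and citing that statement (or running the Shearer argument directly); as written, the key step rests on a nonexistent general theorem.
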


\subsection{Extensions between measure preserving systems}
We say that a probability space $(X,\mathcal{B},\mu)$ is \emph{regular} if there exists a metric on $X$ suct that
$X$ is a compact metric space and $\mathcal{B}$ consists of all Borel subsets of $X$. In this paper, we always assume that probability spaces to be regular.
A \emph{measure preserving system} $(X,\mathcal{B},\mu,\mathcal{T})$ consists of a probability space $(X,\mathcal{B},\mu)$ and a group $\mathcal{T}$ acting on $X$ by transformations preserving measure $\mu$.
A measure preserving system $(X,\mathcal{B},\mu,\mathcal{T})$ is \emph{regular} if the underlying  probability space $(X,\mathcal{B},\mu)$ is regular.

A homomorphism of measure preserving systems $\pi:(X,\mathcal{B},\mu,\mathcal{T})\to (Y,\mathcal{D},\nu,\mathcal{T})$ is given by
a homomorphism $\pi\colon (X,\mathcal{B},\mu)\to (Y,\mathcal{D},\nu)$
satisfying
\begin{enumerate}
	\item $\pi^{-1}(A_1\cup A_2)=\pi^{-1}(A_1)\cup\pi^{-1}(A_2)$,
	$A_1,A_2\in\widetilde{\mathcal{D}}$,
	\item $\pi^{-1}(\widetilde Y\setminus A)=X\setminus \pi^{-1}(A)$,
	$A\in\widetilde{\mathcal{D}}$,
	\item $\mu(\pi^{-1}(A))=\nu(A)$, $A\in\widetilde{\mathcal{D}}$,
	\item $\pi^{-1}(T^{-1}A)=T^{-1}(\pi^{-1}(A))$,
	 $A\in\widetilde{\mathcal{D}}$, $T\in\mathcal{T}$,
\end{enumerate}
where $\widetilde{\mathcal{D}}$ is the abstract $\sigma$-algebra
consisting of equivalence classes of sets in $\mathcal{D}$ (mod null sets).
In this case we say that $(X,\mathcal{B},\mu,\mathcal{T})$  is \emph{an extension of $(Y,\mathcal{D},\nu,\mathcal{T})$} or that $(Y,\mathcal{D},\nu,\mathcal{T})$  is  a \emph{factor  of  $(X,\mathcal{B},\mu,\mathcal{T})$}, and $\pi$ is a \emph{factor map}.
The following result is well known (see e.g. \cite[Theorem 5.8]{F})
\begin{thm}\label{F5.8}
Let $(X,\mathcal{B}_X,\mu,\mathcal{T})$ be a regular measure preserving system  and $\pi:(X,\mathcal{B}_X,\mu,\mathcal{T})\to(Y,\mathcal{B}_Y,\nu,\mathcal{T})$ be a factor map.
Then there exists a measurable map from $Y$ to $\mathcal{M}(X)$ which we shall denote $y\to \mu_y$ which satisfies:
\begin{enumerate}
  \item for every $f\in L^1(X,\mathcal{B},\mu)$, $f\in L^1 (X,\mathcal{B},\mu_y)$ for $\nu$-a.e. $y\in Y$, and
  $$E(f |Y)(y)= \int fd\mu_y$$
  for $\nu$-a.e. $y\in Y$;
  \item$\int \{\int fd\mu_y\}d\nu(y)=\int fd\mu $ for every $f\in L^1(X,\mathcal{B},\mu)$.
\end{enumerate}
\end{thm}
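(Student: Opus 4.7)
The plan is to construct the family $\{\mu_y\}_{y \in Y}$ via the Riesz representation theorem applied to conditional expectations on a countable dense subspace of $C(X)$, and then to extend the desired identities from continuous functions to arbitrary $L^1$-functions by a density argument. Since $(X,\mathcal{B}_X,\mu)$ is regular, $X$ is compact metric and $C(X)$ is separable. First I would fix a countable $\mathbb{Q}$-linear subspace $\mathcal{D}\subset C(X)$ that is uniformly dense in $C(X)$, contains the constant function $1$, and is closed under $\mathbb{Q}$-linear combinations. For each $f\in\mathcal{D}$, select a fixed Borel representative $\widetilde{E}(f\mid\mathcal{B}_Y)$ of the conditional expectation $E(f\mid\mathcal{B}_Y)\in L^1(Y,\nu)$.

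Next I would exhibit a $\nu$-null set $N\subset Y$ outside of which the map $f\mapsto \widetilde{E}(f\mid\mathcal{B}_Y)(y)$ is simultaneously $\mathbb{Q}$-linear on $\mathcal{D}$, positive on nonnegative elements of $\mathcal{D}$, normalized so that $\widetilde{E}(1\mid\mathcal{B}_Y)(y)=1$, and bounded by $\|f\|_{\infty}$. Each individual identity or inequality holds $\nu$-a.e., and since $\mathcal{D}$ is countable, the union of the resulting countable family of exceptional null sets is again $\nu$-null. Off $N$, this map extends uniquely by continuity to a positive linear functional $\Lambda_y\colon C(X)\to\mathbb{R}$ with $\Lambda_y(1)=1$, and the Riesz representation theorem supplies a unique $\mu_y\in\mathcal{M}(X)$ with $\Lambda_y(f)=\int f\,d\mu_y$ for every $f\in C(X)$. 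For $y\in N$ one may set $\mu_y$ equal to an arbitrary fixed probability measure on $X$. Borel measurability of $y\mapsto\mu_y$, regarded as a map into $\mathcal{M}(X)$ with the weak$^*$-topology, follows at once because $y\mapsto \int f\,d\mu_y$ coincides with $\widetilde{E}(f\mid\mathcal{B}_Y)(y)$ off $N$ for every $f\in C(X)$.

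To upgrade conclusion (1) from $f\in C(X)$ to general $f\in L^1(X,\mathcal{B},\mu)$, I would approximate $f$ in $L^1(\mu)$ by a sequence $\{f_n\}\subset C(X)$; the identity $\int f_n\,d\mu_y=E(f_n\mid\mathcal{B}_Y)(y)$ holds $\nu$-a.e.\ by construction. Applying the construction to $|f|$ and using monotone convergence together with Fatou's lemma $\mu_y$-wise yields that $f\in L^1(X,\mathcal{B},\mu_y)$ for $\nu$-a.e.\ $y$, after which passing to an a.e.-convergent subsequence transfers the identity to $f$. Conclusion (2) then reduces to the defining property of conditional expectation, namely $\int E(f\mid\mathcal{B}_Y)\,d\nu=\int f\,d\mu$.

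The main technical obstacle is the coordinated handling of the countably many $\nu$-null exceptional sets: one must ensure that all algebraic, order, and norm properties of $\Lambda_y$ hold off a \emph{single} null set $N\subset Y$, not off an $f$-dependent null set. It is precisely to arrange this that one works first on the countable dense subspace $\mathcal{D}$, and only afterwards extends by continuity and density to $C(X)$ and then to $L^1(X,\mathcal{B},\mu)$.
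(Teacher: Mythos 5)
The paper does not prove this statement at all: it is quoted as a known result with a pointer to \cite[Theorem 5.8]{F}, so there is no in-paper argument to compare against. Your construction --- conditional expectations on a countable $\mathbb{Q}$-linear dense subspace of $C(X)$, a single exceptional null set $N$ collecting the countably many a.e.\ identities, Riesz representation off $N$, and weak$^*$ measurability of $y\mapsto\mu_y$ read off from $y\mapsto\widetilde{E}(f\mid\mathcal{B}_Y)(y)$ --- is the standard proof and essentially the one in the cited source; the point you single out (forcing all algebraic and order properties to hold off one null set) is indeed the crux.

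One step deserves tightening: the passage from $f\in C(X)$ to general $f\in L^1(\mu)$. You write ``applying the construction to $|f|$ and using monotone convergence,'' but the Riesz/conditional-expectation construction was only carried out for continuous functions, and an arbitrary nonnegative Borel $|f|$ is not a pointwise increasing limit of continuous functions; likewise, an $L^1(\mu)$-convergent sequence $f_n\to f$ with an a.e.-convergent subsequence does not by itself give $\int f_n\,d\mu_y\to\int f\,d\mu_y$ without domination in $L^1(\mu_y)$. The clean fix is the usual intermediate stage: let $\mathcal{L}$ be the set of bounded Borel $f$ for which $y\mapsto\int f\,d\mu_y$ is Borel and equals $E(f\mid\mathcal{B}_Y)(y)$ for $\nu$-a.e.\ $y$; this is a vector space containing $C(X)$ and closed under bounded monotone limits, so by the functional monotone class theorem it contains all bounded Borel functions. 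Then truncate a nonnegative $f\in L^1(\mu)$ by $f_n=\min(f,n)$ and use monotone convergence on both sides (finiteness of $E(f\mid\mathcal{B}_Y)$ a.e.\ gives $f\in L^1(\mu_y)$ for $\nu$-a.e.\ $y$), and finally split a general $f$ into positive and negative parts. Conclusion (2) is then, as you say, just $\int E(f\mid\mathcal{B}_Y)\,d\nu=\int f\,d\mu$. With that patch the argument is complete.
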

We shall write $\nu=\int\mu_yd\nu$ and refer to this as \emph{the disintegration  of $\mu $   with  respect to  the  factor   $(Y,\mathcal{D},\nu)$} (or \emph{the disintegration of $\mu$ over  $\nu $}). For each $S\in\mathcal{T}$ and for almost every $y\in Y$, $\mu_{Sy}=S\mu_{y}$.

Let $(X_1,\mathcal{B}_1,\mu_{1})$ and $(X_2,\mathcal{B}_2,\mu_2)$ be two regular measure spaces and
$$\pi_1:(X_1,\mathcal{B}_1,\mu_1)\to (Y,\mathcal{D},\nu), \ \ \ \pi_2:(X_2,\mathcal{B}_2,\mu_2)\rightarrow (Y,\mathcal{D},\nu) $$
are two extensions of the same space $(Y,\mathcal{D},\nu)$.
The measure space $(X_1\times X_2,\mathcal{B}_1\times\mathcal{B}_2,\mu_1\times_Y\mu_2)$ is called \emph{the  product  of  $(X_1,\mathcal{B}_1,\mu_1)$  and  $(X_2,\mathcal{B}_2,\mu_2)$  relative  to $(Y,\mathcal{D},\nu)$ } and is denoted by $X_1\times_Y X_2$, where $\mu_1\times_Y\mu_2$ is a measure on $(X_1\times X_2,\mathcal{B}_1\times \mathcal{B}_2)$ defined by:
$$(\mu_1\times_Y\mu_2)(A)=\int\mu_{1,y}\times\mu_{2,y}(A)d\nu(y)$$
for $A\in \mathcal{B}_1\times \mathcal{B}_2$, and $\mu_i=\int \mu_{i,y}d\nu(y)$ are the disintegrations of $\mu_i$ over $\nu$, for $i=1,2$.

Let $(X,\mathcal{B},\mu,\mathcal{T})$
be a regular measure preserving system and $\pi:(X,\mathcal{B},\mu,\mathcal{T})\to (Y,\mathcal{D},\nu,\mathcal{T})$ be a factor map.
We say that $\pi$ is an \emph{ergodic  extension   of  $(Y,\mathcal{D},\nu,\mathcal{T})$  relative  to $T\in\mathcal{T}$}
if the only $T$-invariant sets of $\mathcal{B}$ are images (modulo null sets) under $\pi^{-1}$ of $T$-invariant sets of $\mathcal{D}$,
and a \emph{weakly mixing extension  of $(Y,\mathcal{D},\nu,\mathcal{T})$ relative to $T\in \mathcal{T}$} if
$(X\times X,\mathcal{B}\times\mathcal{B},\mu\times_Y\mu,\mathcal{T})$ is an ergodic extension of $(Y,\mathcal{D},\nu,\mathcal{T})$ relative to $T$.
We say that $\pi$ is an \emph{ ergodic extension  (or a  weakly  mixing extension ) of $(Y,\mathcal{D},\nu,\mathcal{T})$}
if $\pi$ is an ergodic extension (or a weakly mixing extension) of $(Y,\mathcal{D},\nu,\mathcal{T})$ relative to every $T\in\mathcal{T}\setminus \{\theta_{\mathcal{T}}\}$.

Furstenberg proved the following proposition (see \cite[Proposition 6.4]{F}).
\begin{prop}\label{TWM}
Let $(X,\mathcal{B},\mu,\mathcal{T})$ be a regular measure preserving system, and let $\pi:(X,\mathcal{B},\mu,\mathcal{T})\to (Y,\mathcal{D},\nu,\mathcal{T})$ be a factor map.
If $\pi$ is a weakly mixing extension of $(Y,\mathcal{D},\nu,\mathcal{T})$,
then $\widetilde{\pi}=\pi\circ \pi_1:(X\times X,\mathcal{B}\times\mathcal{B},\mu\times_Y\mu,\mathcal{T})\to (Y,\mathcal{D},\nu,\mathcal{T})$ is also a weakly mixing extension of $(Y,\mathcal{D},\nu,\mathcal{T})$, where $\pi_1:X\times X\to X$ is the projection to the first coordinate.
\end{prop}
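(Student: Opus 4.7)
The plan is to reduce the statement to showing that the four-fold relative fiber product $(X\times_Y X\times_Y X\times_Y X,\lambda,\mathcal{T})$, with $\lambda:=\int \mu_y\times\mu_y\times\mu_y\times\mu_y\,d\nu(y)$, is an ergodic extension of $(Y,\mathcal{D},\nu,\mathcal{T})$ relative to every $T\in\mathcal{T}\setminus\{\theta_{\mathcal{T}}\}$. The identification uses that on the support of $\mu\times_Y\mu$ one has $\widetilde\pi(x_1,x_2)=\pi(x_1)=\pi(x_2)$, so Theorem \ref{F5.8} applied to $\widetilde\pi$ yields the disintegration of $\mu\times_Y\mu$ over $\nu$ with fiber measure $\mu_y\times\mu_y$; consequently the relative fiber product $(X\times X)\times_Y(X\times X)$ carries exactly $\lambda$.

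Fix $T\in\mathcal{T}\setminus\{\theta_{\mathcal{T}}\}$. I would invoke the standard $L^2$ characterization of relative weak mixing: the hypothesis that $\pi$ is a weakly mixing extension relative to $T$ is equivalent to the assertion that for every $f\in L^\infty(X,\mu)$ with $E(f\mid Y)=0$,
$$\lim_{N\to\infty}\frac{1}{N}\sum_{n=0}^{N-1}\bigl\|E(T^n f\cdot\overline f\mid Y)\bigr\|_{L^1(\nu)}=0.$$
This is the relative analogue of the Koopman-von Neumann criterion and follows from the defining property via a Hilbert-module spectral argument.

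The heart of the argument is a tensor bootstrap. Taking a $T$-invariant $F\in L^2(\lambda)$ with $E(F\mid Y)=0$, the goal is to show $F=0$. Approximate $F$ in $L^2(\lambda)$ by finite combinations of simple tensors $G=g_1\otimes g_2\otimes g_3\otimes g_4$ with $g_j\in L^\infty(X,\mu)$; after decomposing $g_j=E(g_j\mid Y)+g_j^{\perp}$ and discarding the fully $Y$-measurable contribution (orthogonal to $F$), one may assume each $G$ has at least one factor $g_{j_0}^\perp$ satisfying $E(g_{j_0}\mid Y)=0$. By the Mean Ergodic Theorem and $T$-invariance,
$$\|F\|_{L^2(\lambda)}^2=\lim_{N\to\infty}\frac{1}{N}\sum_{n=0}^{N-1}\langle T^n F,F\rangle_\lambda,$$
and the fiber-product structure of $\lambda$ yields
$$\langle T^n G,G\rangle_\lambda=\int\prod_{j=1}^{4} E(T^n g_j\cdot\overline{g_j}\mid Y)(y)\,d\nu(y).$$
Bounding three of the four relative correlations by $\|g_j\|_\infty^2$ in $L^\infty(\nu)$ and applying the criterion above to the index $j_0$ force the Cesàro average of these integrals to vanish; combined with the approximation, this gives $\|F\|_{L^2(\lambda)}=0$.

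The main obstacle is the simultaneous bookkeeping of four relative correlations and the orthogonal decomposition needed to isolate the one factor to which the relative weak-mixing criterion is applied. A cleaner alternative is to proceed by induction: first establish (by the same two-coordinate version of the above argument) that $\pi^{(2)}:(X\times_Y X,\mu\times_Y\mu)\to(Y,\nu)$ is a weakly mixing extension, and then reapply the conclusion with $\pi^{(2)}$ in place of $\pi$ to deduce weak mixing of $\widetilde\pi$.
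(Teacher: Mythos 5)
The paper does not actually prove this proposition; it is quoted from Furstenberg \cite[Proposition 6.4]{F}, whose proof iterates the auxiliary fact that the fiber product of a weakly mixing extension with an ergodic extension of the same base is again an ergodic extension. Your argument is a legitimate alternative route. The reduction to showing that the four-fold fiber product is an ergodic extension relative to each $T$ is correct (the disintegration of $\mu\times_Y\mu$ over $\nu$ is indeed $y\mapsto\mu_y\times\mu_y$), and your tensor bootstrap is essentially an unrolled version of Furstenberg's iteration. The one place where you lean on an unproved ``standard fact'' is the relative Koopman--von Neumann criterion. Only one implication is needed, and it follows from the hypothesis by an elementary computation rather than a spectral argument: for bounded $g,h$,
$$\frac1N\sum_{n=0}^{N-1}\bigl\|E(T^ng\cdot\bar h\mid Y)\bigr\|_{L^2(\nu)}^2=\Bigl\langle\frac1N\sum_{n=0}^{N-1}T^n(g\otimes\bar g),\,h\otimes\bar h\Bigr\rangle_{\mu\times_Y\mu}\longrightarrow\langle P(g\otimes\bar g),h\otimes\bar h\rangle,$$
where $P$ is the orthogonal projection onto the $T$-invariant part of $L^2(\mu\times_Y\mu)$; relative ergodicity of $X\times_YX\to Y$ with respect to $T$ forces $P(g\otimes\bar g)$ to be the lift of the $T$-invariant function $E(P(g\otimes\bar g)\mid Y)=\lim_N\frac1N\sum_n E(T^ng\mid Y)\,E(T^n\bar g\mid Y)$, which vanishes term by term when $E(g\mid Y)=0$. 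Note that you need exactly this \emph{bilinear} form (one mean-zero factor paired against an arbitrary bounded one): after approximating $F$ by a finite sum of simple tensors, the cross terms $\langle T^nG_a,G_b\rangle_\lambda$ pair a mean-zero factor of $G_a$ with a factor of $G_b$ that need not be mean-zero, whereas your statement of the criterion only covers the autocorrelation case. With the bilinear version in hand, the rest of your estimate goes through.

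Two smaller points. First, you should record why ``every $T$-invariant $F$ with $E(F\mid Y)=0$ vanishes'' suffices for relative ergodicity: for a general $T$-invariant $F$, the function $E(F\mid Y)$ is automatically $T$-invariant on $Y$ (because $\mu_{Ty}=T\mu_y$), so $F-E(F\mid Y)\circ p$, with $p$ the projection of the four-fold product onto $Y$, falls under your reduction. Second, the ``cleaner alternative'' in your last paragraph is circular: since $(X\times X,\mu\times_Y\mu)$ \emph{is} $X\times_YX$ as a measure system, the assertion that $\pi^{(2)}$ is a weakly mixing extension is literally the statement to be proved, not an intermediate step; the genuine inductive shortcut is Furstenberg's, which adjoins one fiber copy of $X$ at a time.
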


Let $(X,\mathcal{T})$ be a $\mathcal{T}$-system, where $\mathcal{T}$ is a countable infinite discrete amenable group. For $\mu\in\mathcal{M}(X,\mathcal{T})$ and $\mathcal{T}$-invariant sub-$\sigma$-algebra $\mathcal{A}$ of $\mathcal{B}_X$, denote
$$P_X^{\mu}(\mathcal{T}|\mathcal{A})=\{A\in \mathcal{B}_X: h_{\mu}(\mathcal{T},\{A,X\setminus A\}|\mathcal{A})=0\}.$$
It follows from \cite[Lemma 1.1]{Glasner-Thouvenot-Weiss} or \cite[Theorem 3.1]{Huang-Xu-Yi} that $P_X^{\mu}(\mathcal{T}|\mathcal{A})$ must be a $\mathcal{T}$-invariant sub-$\sigma$-algebra of $\mathcal{B}_X$ containing $\mathcal{A}$. We call this $\sigma$-algebra the \emph{Pinsker $\sigma$-algebra of $(X,\mathcal{B}_X,\mathcal{T},\mu)$ relative to $\mathcal{A}$}, and the corresponding factor the \emph{relative Pinsker factor}.
When $\mathcal{A}$ is the trivial $\sigma$-algebra we get the \emph{Pinsker algebra} and \emph{Pinsker factor of $X$}
and we denote this $\sigma$-algebra by $P_X^{\mu}(\mathcal{T})$.
The following theorem is a classic result (see for example \cite[Theorem 0.4]{Glasner-Thouvenot-Weiss} or \cite[Lemma 4.2]{Huang-Xu-Yi}).

\begin{thm}\label{WM}
Let $(X,\mathcal{T})$ be a $\mathcal{T}$-system, where $\mathcal{T}$ is a countable infinite discrete amenable group, $\mu\in\mathcal{M}^e(X,T)$,
and $\pi:(X,\mathcal{B}_X,\mu,\mathcal{T})\to (Z,\mathcal{B}_Z,\nu,\mathcal{T})$ be the factor map to the Pinsker factor of $(X,\mathcal{B}_X,\mu,\mathcal{T})$.
Assume that $\pi_1:X\times X\to X$ is the  projection to the first coordinate and $\widetilde{\pi}=\pi\circ\pi_1$.
Then  $P_{X\times X}^{\mu\times_Z\mu}(\mathcal{T}|\widetilde{\pi}^{-1}(\mathcal{B}_Z))=\widetilde{\pi}^{-1}(\mathcal{B}_Z)$ $(\bmod \mu\times_Z\mu)$.
\end{thm}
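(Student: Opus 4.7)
The inclusion $\widetilde{\pi}^{-1}(\mathcal{B}_Z) \subseteq P_{X\times X}^{\mu\times_Z\mu}(\mathcal{T}\mid\widetilde{\pi}^{-1}(\mathcal{B}_Z))$ is immediate from the definition of the Pinsker $\sigma$-algebra: any set in the conditioning $\sigma$-algebra contributes no conditional entropy. The substance of the statement is the reverse inclusion, and the plan is to exploit the fact that $Z$ being the Pinsker factor of $(X,\mathcal{B}_X,\mu,\mathcal{T})$ is equivalent to $\pi\colon X\to Z$ being a \emph{completely positive entropy} extension, i.e.\ $h_\mu(\mathcal{T},\alpha\mid \pi^{-1}(\mathcal{B}_Z))>0$ for every $\alpha\in\mathcal{P}_X$ which is not $\pi^{-1}(\mathcal{B}_Z)$-measurable (mod $\mu$). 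This, together with the relative independence furnished by $\mu\times_Z\mu$, should force any zero-relative-entropy set to be measurable over $Z$.

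The key technical step is the relative product additivity
\[
h_{\mu\times_Z\mu}\bigl(\mathcal{T},\,\pi_1^{-1}(\alpha)\vee\pi_2^{-1}(\beta)\,\bigm|\,\widetilde{\pi}^{-1}(\mathcal{B}_Z)\bigr)
= h_\mu(\mathcal{T},\alpha\mid \pi^{-1}(\mathcal{B}_Z)) + h_\mu(\mathcal{T},\beta\mid \pi^{-1}(\mathcal{B}_Z))
\]
for all $\alpha,\beta\in\mathcal{P}_X$, where $\pi_2\colon X\times X\to X$ is the second coordinate projection. By Theorem \ref{F5.8} the disintegration is $\mu\times_Z\mu=\int \mu_z\times\mu_z\,d\nu(z)$, so conditionally on $Z$ the two coordinates are independent. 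Consequently, for every finite $F\subset \mathcal{T}$,
\[
H_{\mu\times_Z\mu}\bigl((\pi_1^{-1}\alpha\vee\pi_2^{-1}\beta)_F\bigm|\widetilde{\pi}^{-1}(\mathcal{B}_Z)\bigr)
= H_\mu\bigl(\alpha_F\bigm|\pi^{-1}(\mathcal{B}_Z)\bigr) + H_\mu\bigl(\beta_F\bigm|\pi^{-1}(\mathcal{B}_Z)\bigr).
\]
Dividing by $|F|$ and invoking the infimum characterization (Proposition \ref{Entropy}, applied in its conditional form along a F\o{}lner sequence) transfers the identity to entropy rates.

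Granted the additivity, let $A\in P_{X\times X}^{\mu\times_Z\mu}(\mathcal{T}\mid\widetilde{\pi}^{-1}(\mathcal{B}_Z))$. Approximating $\mathbf{1}_A$ in $L^1(\mu\times_Z\mu)$ by linear combinations of indicators of measurable rectangles produces partitions of the form $\pi_1^{-1}(\alpha)\vee\pi_2^{-1}(\beta)$ that approach $\{A,A^c\}$ in the Rokhlin metric $d(\xi,\eta)=H(\xi\mid\eta)+H(\eta\mid\xi)$. Continuity of relative entropy rate in this metric, combined with $h_{\mu\times_Z\mu}(\mathcal{T},\{A,A^c\}\mid \widetilde{\pi}^{-1}(\mathcal{B}_Z))=0$, forces both $h_\mu(\mathcal{T},\alpha\mid \pi^{-1}(\mathcal{B}_Z))$ and $h_\mu(\mathcal{T},\beta\mid \pi^{-1}(\mathcal{B}_Z))$ to tend to $0$ along the approximation. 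By the CPE property of $\pi$, the parts of $\alpha$ and $\beta$ not measurable with respect to $\pi^{-1}(\mathcal{B}_Z)$ must then vanish in the limit, and passing to the limit yields $A\in\widetilde{\pi}^{-1}(\mathcal{B}_Z)$ (mod $\mu\times_Z\mu$). The main obstacle is the additivity formula itself: in the amenable setting one cannot rely on the $\mathbb{Z}$-action limit $\frac{1}{n}H_\mu(\alpha_0^{n-1})$, and must instead handle conditional subadditivity through the disintegration and the infimum characterization of Proposition \ref{Entropy}. Otherwise the argument follows the blueprint of \cite{Glasner-Thouvenot-Weiss} and \cite{Huang-Xu-Yi}.
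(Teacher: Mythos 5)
The paper does not actually prove this statement: it is quoted as a classical result with references to \cite[Theorem 0.4]{Glasner-Thouvenot-Weiss} and \cite[Lemma 4.2]{Huang-Xu-Yi}, so there is no in-paper proof to compare against. Judged on its own terms, your proposal gets the easy inclusion and the additivity formula right, but the final step has a genuine gap. The identity
$h_{\mu\times_Z\mu}(\mathcal{T},\pi_1^{-1}\alpha\vee\pi_2^{-1}\beta\mid\widetilde{\pi}^{-1}(\mathcal{B}_Z))
=h_\mu(\mathcal{T},\alpha\mid\pi^{-1}(\mathcal{B}_Z))+h_\mu(\mathcal{T},\beta\mid\pi^{-1}(\mathcal{B}_Z))$
does follow from conditional independence of the two coordinates under $\int\mu_z\times\mu_z\,d\nu$ together with the F\o{}lner limit, and it shows that \emph{product} partitions built from non-$Z$-measurable pieces have positive relative entropy. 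But your passage from product partitions to an arbitrary $A$ with $h_{\mu\times_Z\mu}(\mathcal{T},\{A,A^c\}\mid\widetilde{\pi}^{-1}(\mathcal{B}_Z))=0$ does not work. You cannot approximate $\{A,A^c\}$ by partitions of the form $\pi_1^{-1}(\alpha)\vee\pi_2^{-1}(\beta)$ in the Rokhlin metric: one can arrange $H(\{A,A^c\}\mid\pi_1^{-1}\alpha\vee\pi_2^{-1}\beta)\to 0$, but the other half, $H(\pi_1^{-1}\alpha\vee\pi_2^{-1}\beta\mid\{A,A^c\})$, is at least $H(\pi_1^{-1}\alpha\vee\pi_2^{-1}\beta)-\log 2$ and blows up as the rectangles get finer. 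With only the one-sided approximation, the available continuity estimate $h(\mathcal{T},\xi\mid\mathcal{C})\le h(\mathcal{T},\eta\mid\mathcal{C})+H(\xi\mid\eta)$ runs in the wrong direction: it bounds the relative entropy of the coarse partition $\{A,A^c\}$ by that of the fine product partition, and gives no upper bound on $h_\mu(\mathcal{T},\alpha\mid\pi^{-1}(\mathcal{B}_Z))$ or $h_\mu(\mathcal{T},\beta\mid\pi^{-1}(\mathcal{B}_Z))$ from the vanishing of $h(\mathcal{T},\{A,A^c\}\mid\mathcal{C})$.

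This is not a repairable technicality: the statement is precisely the ``relative Pinsker algebra of a relative product equals the lift of the base'' theorem, and it is not a formal consequence of entropy additivity on rectangles, because a priori the relative Pinsker algebra of $X\times_Z X$ could contain diagonal-type sets far from any coordinate algebra. The known proofs (Glasner--Thouvenot--Weiss; Huang--Xu--Yi) go through the relative disjointness theorem --- a zero-relative-entropy extension of $Z$ is relatively disjoint from a relatively CPE extension of $Z$ --- applied to the relative Pinsker factor of $(X\times X,\mu\times_Z\mu)$ over $Z$ on one side and to $\pi\colon X\to Z$ on the other. If you want a complete argument you would need to import that disjointness machinery (or an equivalent, e.g.\ the orbit-equivalence reduction to $\mathbb{Z}$-actions in Danilenko's approach); the additivity formula alone, even with the CPE property of $\pi$, does not close the argument. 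A secondary soft spot: even where you invoke ``the parts of $\alpha$ and $\beta$ not measurable over $Z$ must vanish in the limit,'' CPE gives no uniform lower bound on relative entropy in terms of the distance of a partition from $\pi^{-1}(\mathcal{B}_Z)$, so that limiting step would also need justification.
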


\begin{prop}\label{Pinsker}
Let $(X,\mathcal{T})$ be a $\mathcal{T}$-system, where $\mathcal{T}$ is a countable infinite discrete amenable group, $\mu\in\mathcal{M}^e(X,T)$,
and $\pi:(X,\mathcal{B}_X,\mu,\mathcal{T})\to (Z,\mathcal{B}_Z,\nu,\mathcal{T})$ be the factor map to the Pinsker factor of $(X,\mathcal{B}_X,\mu,\mathcal{T})$.
Then $\pi$ is a weakly mixing extension.
\end{prop}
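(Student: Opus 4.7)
The plan is to derive Proposition \ref{Pinsker} from the trivialization of the relative Pinsker algebra supplied by Theorem \ref{WM}. Fix $T\in\mathcal{T}\setminus\{\theta_{\mathcal{T}}\}$; unwinding the definitions of a weakly mixing extension and of an ergodic extension relative to $T$, it suffices to prove that every $(T\times T)$-invariant set $A\in\mathcal{B}_X\times\mathcal{B}_X$ agrees, modulo $\mu\times_Z\mu$, with $\widetilde{\pi}^{-1}(B)$ for some $T$-invariant $B\in\mathcal{B}_Z$.

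Set $\alpha=\{A,(X\times X)\setminus A\}$. Theorem \ref{WM} asserts that the relative Pinsker algebra of $(X\times X,\mathcal{B}_X\times\mathcal{B}_X,\mu\times_Z\mu,\mathcal{T})$ over $\widetilde{\pi}^{-1}(\mathcal{B}_Z)$ is already $\widetilde{\pi}^{-1}(\mathcal{B}_Z)$ itself. Consequently, once one verifies
\[
h_{\mu\times_Z\mu}(\mathcal{T},\alpha\mid\widetilde{\pi}^{-1}(\mathcal{B}_Z))=0,
\]
the set $A$ automatically belongs to $\widetilde{\pi}^{-1}(\mathcal{B}_Z)$ up to $\mu\times_Z\mu$-null sets, which is exactly what is required.

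To see that this conditional entropy vanishes, I would invoke the conditional analogue of Proposition \ref{Entropy}, namely
\[
h_{\mu\times_Z\mu}(\mathcal{T},\alpha\mid\widetilde{\pi}^{-1}(\mathcal{B}_Z))=\inf_{\substack{F\subset\mathcal{T}\\ |F|<\infty}}\frac{H_{\mu\times_Z\mu}(\alpha_F\mid\widetilde{\pi}^{-1}(\mathcal{B}_Z))}{|F|},
\]
which for a $\mathcal{T}$-invariant conditioning $\sigma$-algebra is a standard consequence of the subadditivity of $F\mapsto H_{\mu\times_Z\mu}(\alpha_F\mid\widetilde{\pi}^{-1}(\mathcal{B}_Z))$ and a Fekete-type argument. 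Testing this infimum on $F_n=\{\theta_{\mathcal{T}},T,T^2,\dotsc,T^{n-1}\}$, which has exactly $n$ elements in the torsion-free setting relevant for this paper, the $(T\times T)$-invariance of $A$ yields $\alpha_{F_n}=\alpha$, so $H_{\mu\times_Z\mu}(\alpha_{F_n}\mid\widetilde{\pi}^{-1}(\mathcal{B}_Z))\leq\log 2$ uniformly in $n$. Dividing by $|F_n|=n$ and letting $n\to\infty$ forces the infimum, hence the conditional entropy, to vanish.

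Writing $A=\widetilde{\pi}^{-1}(B)$ mod null, the equivariance $\widetilde{\pi}\circ(T\times T)=T\circ\widetilde{\pi}$ together with $(T\times T)A=A$ gives $\widetilde{\pi}^{-1}(TB)=\widetilde{\pi}^{-1}(B)$, so $TB=B$ modulo $\nu$; this establishes ergodicity of $\widetilde{\pi}$ relative to $T$, and since $T$ was arbitrary, $\pi$ is a weakly mixing extension. The main obstacle is isolating and justifying the conditional refinement of Proposition \ref{Entropy}, since the cited statement is purely unconditional; once that infimum formula is secured, the rest of the argument reduces to the one-line cardinality/invariance calculation above.
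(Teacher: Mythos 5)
Your argument is correct and follows essentially the same route as the paper: reduce via Theorem \ref{WM} to showing that the relative Pinsker algebra over $\widetilde{\pi}^{-1}(\mathcal{B}_Z)$ absorbs any $(T\times T)$-invariant set $A$, and make the entropy of $\alpha=\{A,(X\times X)\setminus A\}$ vanish by testing the infimum on $F_n=\{T,T^2,\dotsc,T^n\}$, for which $\alpha_{F_n}=\alpha$. The only divergence is that the ``main obstacle'' you single out --- a conditional analogue of Proposition \ref{Entropy} --- is not needed: the paper simply uses the elementary bound $h_{\mu\times_Z\mu}(\mathcal{T},\alpha\mid\widetilde{\pi}^{-1}(\mathcal{B}_Z))\leq h_{\mu\times_Z\mu}(\mathcal{T},\alpha)$ and then applies the unconditional infimum formula, so your Fekete-type detour can be deleted.
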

\begin{proof}
Let  $\pi_1:X\times X$ be the  projection to the first coordinate and $\widetilde{\pi}=\pi\circ \pi_1:(X\times X, \mathcal{B}_X\times\mathcal{B}_X,\mu\times_Z\mu,\mathcal{T})\to (Z,\mathcal{B}_Z,\nu,\mathcal{T})$.
	
For any $T\in\mathcal{T}\setminus{\theta_{\mathcal{T}}}$, we shall show that $\widetilde{\pi}$ is an ergodic extension of $(Z,\mathcal{B}_Z,\nu,\mathcal{T})$ relative to $T$. Suppose $E \in \mathcal{B}_X\times \mathcal{B}_X$ such that $TE=E$. Let $\alpha=\{E,X\times X\setminus E\}$, and $F_n=\{T,T^2,\dotsc ,T^n\}\subset\mathcal{T}$, $n\in\N$. Then for any $n\in\N$, $\alpha_{F_n}=\alpha$. By Proposition \ref{Entropy} we have
	$$h_{\mu\times_Z\mu}(\mathcal{T},\alpha|\widetilde{\pi}^{-1}(\mathcal{B}_Z))\leq h_{\mu\times_Z\mu}(\mathcal{T},\alpha)=\inf_{F\subset\mathcal{T},|\mathcal{F}|<\infty}\frac{H_{\mu\times_Z\mu}(\alpha_{F})}{|F|}\leq \frac{H_{\mu\times_Z\mu}(\alpha_{F_n})}{|F_n|}, $$
	for any $n\in N$. This implies $h_{\mu\times_Z\mu}(\mathcal{T},\alpha\ |\ \widetilde{\pi}^{-1}(\mathcal{B}_Z))=0$, thus
	$$E\in P_{X\times X}^{\mu\times_Z\mu}(\mathcal{T}\ |\ \widetilde{\pi}^{-1}(\mathcal{B_Z}))=\widetilde{\pi}^{-1}(\mathcal{B_Z})$$
	by Theorem \ref{WM}. This finishes the proof.
\end{proof}

\section{$\Delta$-weakly mixing set }
In this section, we assume that $\mathcal{T}$ is a countable   torsion-free discrete group. We will present some properties of $\Delta$-transitive subsets and $\Delta$-weakly mixing subsets of a $\mathcal{T}$-system, by partially following the arguments in \cite[section 3]{H-L-Y-Z}.
\begin{prop}\label{weakly mixng} Let $(X,\mathcal{T})$ be a $\Delta$-weakly mixing $\mathcal{T}$-system, then $X$ is perfect and $(X,\mathcal{T})$ is weakly mixing.
\end{prop}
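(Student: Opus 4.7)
The plan is to split the proof into two independent verifications: that $X$ has no isolated points, and that the diagonal $\mathcal{T}$-action on $X^2$ is topologically transitive (which is the standard notion of weak mixing for a group action). Both parts will be extracted from the hypothesis applied with $m=2$ and $d=1$; in particular, I only need the following single consequence of $\Delta$-weak mixing: there is a residual subset $A\subseteq X^2$ such that for every $(a,b)\in A$ and every $T_1\in\mathcal{T}\setminus\{\theta_{\mathcal{T}}\}$, the positive semi-orbit $\{(T_1^n a, T_1^n b):n\in\N\}$ is dense in $X^2$. Note that since $|X|\geq 2$ and $\mathcal{T}$ is torsion-free, the set $\mathcal{T}\setminus\{\theta_{\mathcal{T}}\}$ is non-empty, so there is always a valid choice of $T_1$.

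For perfectness, I would argue by contradiction. Suppose $x_0\in X$ is isolated; then $(x_0,x_0)$ is isolated in $X^2$. A standard fact (which I will record briefly) is that any isolated point of a topological space lies in every residual subset, because an isolated point is contained in an open singleton that cannot be a subset of any nowhere dense set. Hence $(x_0,x_0)\in A$, and therefore for any fixed $T_1\in\mathcal{T}\setminus\{\theta_{\mathcal{T}}\}$ the orbit $\{(T_1^n x_0, T_1^n x_0):n\in\N\}$ must be dense in $X^2$. But this orbit lies entirely in the diagonal $\Delta_{X^2}=\{(x,x):x\in X\}$, which is closed and proper (using $|X|\geq 2$), a contradiction.

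For weak mixing, given any four non-empty open sets $U_1,U_2,V_1,V_2\subseteq X$, I want to produce some $T\in\mathcal{T}$ with $T(U_1)\cap V_1\neq\emptyset$ and $T(U_2)\cap V_2\neq\emptyset$. Since $A$ is residual in $X^2$, it meets the non-empty open set $U_1\times U_2$; pick $(a,b)\in A\cap(U_1\times U_2)$. Fix any $T_1\in\mathcal{T}\setminus\{\theta_{\mathcal{T}}\}$. Density of the orbit yields an $n\in\N$ with $(T_1^n a, T_1^n b)\in V_1\times V_2$, and then $T:=T_1^n$ satisfies the required intersection conditions via the witnesses $a\in U_1$ and $b\in U_2$.

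The main obstacle is essentially bookkeeping rather than substance: one must choose the correct combination of $m$ and $d$ (both parts need only $m=2,d=1$) and justify the two mild set-theoretic facts used — that isolated points of $X^2$ always sit inside any residual set, and that the diagonal of $X^2$ is a proper closed subset once $|X|\geq 2$. Neither requires any auxiliary machinery beyond the definitions given in the paper.
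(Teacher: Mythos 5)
Your proof is correct, and it rests on the same specialization the paper uses, namely applying the definition with $m=2$. The perfectness half is essentially identical to the paper's: the paper likewise notes that an isolated point $(u,u)$ of $X^2$ lies in the residual set and then derives a contradiction by asking the diagonal orbit $\{T^n(u,u):n\in\N\}$ to meet $V_1\times V_2$ for disjoint $V_1,V_2$ --- a concrete instance of your observation that this orbit is trapped in the proper closed diagonal. In the weak-mixing half you take a slightly different route: you use $d=1$, locate a point $(a,b)$ of the residual set inside $U_1\times U_2$ (residual implies dense in the Baire space $X^2$), and read off $T_1^nU_i\cap V_i\neq\emptyset$ from density of the diagonal $T_1$-orbit; the paper instead uses $d=2$ with two distinct elements $T_1,T_2$, takes any point $x=(x_1,x_2)$ with $\overline{orb_{+}((x,x),T_1\times T_2)}=X^2\times X^2$, and produces the transition element $T_1^nT_2^{-n}$. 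Your variant is a little leaner, needing only one non-identity group element and the density of residual sets, while the paper's avoids having to place the transitive point inside $U_1\times U_2$; both yield the same statement of weak mixing. One cosmetic quibble: the non-emptiness of $\mathcal{T}\setminus\{\theta_{\mathcal{T}}\}$ follows from $\mathcal{T}$ being non-trivial, not from torsion-freeness or from $|X|\geq 2$ (a trivial group is vacuously torsion-free); the paper makes the same implicit non-triviality assumption, so this is not a gap in your argument relative to the paper's own standard.
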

\begin{proof} Recall that we require $|E|\geq 2$ in the definition of $\Delta$-weakly mixing subset $E$ of $(X,\mathcal{T})$. Thus $|X|\geq2$ if $(X,\mathcal{T})$ is $\Delta$-weakly mixing. Suppose $X$ is not perfect, there exists an non-empty open set $U$ of $X$ such that $U=\{u\}$ for some $u\in X$. Now we pick $T\in\mathcal{T}\setminus\{\theta_{\mathcal{T}}\}$, non-empty open subsets $V_1,V_2$ of $X$, such that $V_1\cap V_2=\emptyset$. Since $X^2$ is a $\Delta$-transitive subset of $(X^2,\mathcal{T})$ and $\{(u,u)\}=U\times U$ is an open subset of $X^2$, one has $\{T^n(u,u):n\in\N\}$ is dense in $X^2$. Hence there exists $n_0\in\N$, such that $T^{n_0}u\in V_1$ and $T^{n_0}u\in V_2$, which contradicts with $V_1\cap V_2=\emptyset$. Thus $X$ is perfect.

For any non-empty open subsets $U_1,U_2$ and $V_1,V_2$ of $X$, we pick distinct $T_1,T_2\in\mathcal{T}\setminus\{\theta_{\mathcal{T}}\}$. Since $(X,\mathcal{T})$ is $\Delta$-weakly mixing, there exists $x=(x_1,x_2)\in X^2$ such that
$$\overline {orb_{+}((x,x), T_1\times T_2)}=X^2\times X^2.$$
So we can find $n\in\N$ such that $T_1^nx_i\in U_i$ and $T_2^nx_i\in V_i$ for $i=1,2$, this implies
$$T_{1}^nx_1\in T_1^nT_2^{-n}V_1\cap U_1\neq\emptyset, \ \ T_1^nx_2\in T_1^nT_2^{-n}V_2\cap U_2\neq\emptyset.$$
Thus $(X,\mathcal{T})$ is weakly mixing. This finishes our proof.
\end{proof}

\begin{rem}\label{remark 1} Similarly, we can obtain that $E$ is perfect if it is a $\Delta$-weakly mixing subset of a $\mathcal{T}$-system $(X,\mathcal{T})$.
\end{rem}

Let $(X,\mathcal{T})$ be a $\mathcal{T}$-system.
For any $d\in\N$, $T_1,T_2,\dotsc ,T_d\in\mathcal{T}$, non-empty subsets $V$ and $U_1,U_2,\dotsc ,U_d$ of $X$, we define
$$ N(V;U_1,\dotsc ,U_d \mid T_1,T_2,\dotsc ,T_d):=\{n \in \mathbb{N} \colon V \cap T_1^{-n}U_1\cap\dotsc \cap T_d^{-n}U_d \neq \emptyset \}.$$
\begin{lem}\label{TR}
Let $(X,\mathcal{T})$ be a $\mathcal{T}$-system and $E$ be a closed subset of $X$ with $|E|\geq 2$.
Then $E$ is $\Delta$-transitive if and only for any integer $d\geq 1$, pairwise distinct
$T_1,T_2,\dotsc ,T_d\in \mathcal{T}\setminus\{\theta_{\mathcal{T}}\}$, and non-empty open subsets $V,U_1,U_2,\dotsc ,U_d$ of $X$ intersecting $E$, one has
$$N(V\cap E;U_1,\dotsc ,U_d \mid T_1,T_2,\dotsc ,T_d)\neq \emptyset. $$
\end{lem}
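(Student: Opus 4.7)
The plan is the standard Baire-category translation between orbit-transitivity and hitting times of open sets, adapted to the $\Delta$-setting. The two implications are essentially dual.

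For the forward direction, assume $E$ is $\Delta$-transitive with witnessing residual set $A \subseteq E$. Given $d$, pairwise distinct $T_1,\dotsc,T_d\in\mathcal{T}\setminus\{\theta_{\mathcal{T}}\}$, and non-empty open $V,U_1,\dotsc,U_d$ meeting $E$, the set $V\cap E$ is non-empty and open in $E$, so it meets the residual set $A$; pick $x\in A\cap V\cap E$. Since $E^d\subseteq \overline{orb_+((x,\dotsc,x),T_1\times\dotsb\times T_d)}$ and $U_1\times\dotsb\times U_d$ is a non-empty open subset of $X^d$ meeting $E^d$ (each $U_i\cap E\neq\emptyset$), there exists $n\in\N$ with $T_i^n x\in U_i$ for $i=1,\dotsc,d$. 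Together with $x\in V\cap E$, this gives $n\in N(V\cap E;U_1,\dotsc,U_d\mid T_1,\dotsc,T_d)$.

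For the converse, fix a countable basis $\{W_k\}_{k\in\N}$ of open subsets of $X$ each of which meets $E$ (equivalently, a countable basis of the relative topology on $E$). For each $d\in\N$, each $d$-tuple $(T_1,\dotsc,T_d)$ of pairwise distinct elements of $\mathcal{T}\setminus\{\theta_{\mathcal{T}}\}$, and each $d$-tuple $(k_1,\dotsc,k_d)\in\N^d$, set
\[
O(T_1,\dotsc,T_d;W_{k_1},\dotsc,W_{k_d})
 := E\cap\bigcup_{n\in\N}\bigl(T_1^{-n}W_{k_1}\cap\dotsb\cap T_d^{-n}W_{k_d}\bigr),
\]
which is open in $E$. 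The hitting hypothesis says that for every non-empty open $V\subseteq X$ meeting $E$, the set $V\cap O(T_1,\dotsc,T_d;W_{k_1},\dotsc,W_{k_d})$ is non-empty, so each $O(\dotsc)$ is dense in $E$. Because $\mathcal{T}$ is countable and the basis is countable, there are only countably many such sets; their intersection $A$ is residual in $E$ by Baire's theorem (applied to the complete space $E$). For any $x\in A$, any $d$, any pairwise distinct $T_1,\dotsc,T_d$ and any $(y_1,\dotsc,y_d)\in E^d$ with open neighborhood $W_{k_1}\times\dotsb\times W_{k_d}$ (chosen from the basis and meeting $E$ since each $W_{k_i}$ already contains some point of $E$, namely $y_i$), membership $x\in O(T_1,\dotsc,T_d;W_{k_1},\dotsc,W_{k_d})$ produces $n\in\N$ with $(T_1^n x,\dotsc,T_d^n x)\in W_{k_1}\times\dotsb\times W_{k_d}$. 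Hence $E^d\subseteq\overline{orb_+((x,\dotsc,x),T_1\times\dotsb\times T_d)}$, so $E$ is $\Delta$-transitive.

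The only step that requires any care is the countable-intersection bookkeeping in the reverse direction: the residual set $A$ must work simultaneously for every $d$ and every choice of $(T_1,\dotsc,T_d)$, which is precisely why we need the basis $\{W_k\}$ and the group $\mathcal{T}$ to be countable. Once that is arranged the Baire argument closes the proof; no further obstacle is expected.
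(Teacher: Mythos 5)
Your proof is correct and follows essentially the same route as the paper's: the forward direction picks a transitive point in $V\cap E$ from the residual set, and the converse is the standard Baire-category argument intersecting countably many dense relatively open "hitting" sets in the compact (hence Baire) space $E$. The only difference is bookkeeping — you index directly over all finite tuples of distinct group elements, whereas the paper fixes an enumeration of $\mathcal{T}$ and pads a given $d$-tuple to an initial segment $\{T_1,\dotsc,T_L\}$ with arbitrary extra open sets; your version is, if anything, slightly cleaner.
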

\begin{proof}
Necessity. If $E$ is a $\Delta$-transitive subset of $(X,\mathcal{T})$, then for any $d\geq 1$, pairwise distinct $T_1,T_2,\dotsc ,T_d\in\mathcal{T}\setminus\{\theta_{\mathcal{T}}\}$, and non-empty open subsets $V,U_1,U_2,\dotsc ,U_d$ of $X$ intersecting $E$, there exists $x\in E\cap V$ such that for the diagonal $d$-tuple $(x,x,\dotsc ,x)$ one has
$$\overline{orb_{+}((x,x,\dotsc ,x),T_1\times T_2\times\dotsb \times T_d)}\supseteq E^d.$$
This implies there exists $n_0\in \N$ such that $T_i^{n_0}x\in U_i$, $i=1,2,\dotsc ,d$. Thus $n_0\in N(V\cap E;U_1,\dotsc ,U_d \mid T_1,T_2,\dotsc ,T_d)\neq \emptyset.$

Sufficiency. Let $\mathcal{W}$ be a countable topological base of $X$ and
$$\mathcal{U}=\{U\in\mathcal{W}:U\cap{E}\neq\emptyset\}.$$
Then $\mathcal{U}$ is also a countable set. Since $\mathcal{T}$ is a countable group, we can enumerate it as
$\{\theta_{\mathcal{T}}, T_1,T_2,\dotsc \}$. Let
$$A:=\biggl(\bigcap_{d=1}^{\infty}\bigcap_{\{U_1,U_2,\dotsc ,U_d\}\subset \mathcal{U}}\bigcup_{n=1}^{\infty}(T_1^{-n}U_1\cap T_2^{-n}U_2\cap\dotsc \cap T_d^{-n}U_d)\biggr)\cap E.$$
Then $A$ is a residue subset of $E$. For any $d\geq1$,  pairwise distinct elements $T_1',T_2',\dotsc ,T_d'\in \mathcal{T}\setminus\{\theta_{\mathcal{T}}\}$, and non-empty open subsets $U_1',U_2',\dotsc ,U_d'$ of $X$ intersecting $E$. Choose any $v\in A$, $U_{h_i}\in \mathcal{U}$ such that $U_{h_i} \subset U_i'$ for $i=1,2,\dotsc ,d$, and an integer $L$ large enough such that $\{T_1',T_2',\dotsc ,T_d'\} \subset \{ T_1,T_2,\dotsc ,T_L\} $. Without loss of generality, we can assume $T_i=T_i'$ for $i=1,2,\dotsc ,d$. Since
$$v\in \bigcup_{n=1}^{\infty}(T_1^{-n}U_{h_1}\cap T_2^{-n}U_{h_2}\cap\dotsc \cap T_d^{-n}U_{h_d}\cap\dotsc \cap T_L^{-n}U_{h_L})$$
where $U_{h_{d+1}},U_{h_{d+2}},\dotsc ,U_{h_L}$ are any $L-d$ non-empty open subsets in $\mathcal{U}$, there exists $k\in\mathbb{N}$ such that $T_i^kv\in U_{h_i}\subset U_i'$ for $i=1,2,\dotsc ,d$. This implies
$$\overline {orb_{+}\{(v,v,..,v); T_1'\times T_2'\times\dotsc \times T_d' \}}\supseteq E^d.$$
Thus $E$ is a $\Delta$-transitive subset of $(X,\mathcal{T})$.
\end{proof}

Using Lemma \ref{TR} we have the following result.
\begin{prop}\label{d-WM}
Let $(X,\mathcal{T})$ be a $\mathcal{T}$-system and $E$ be a closed subset of $X$ with $|E|\geq2$.  Then $E$ is a $\Delta$-weakly mixing subset of $X$ if and only if for any $d\geq1$, pairwise distinct $T_1,T_2,\dotsc ,T_d\in \mathcal{T}\setminus \{\theta_{\mathcal{T}} \}$, and non-empty open subsets
$ U_1,U_2,\dotsc ,U_d$ and $V_1,V_2,\dotsc ,V_d$ of $X$ intersecting $E$, one has
$$ \bigcap_{s\in \{1,2,\dotsc ,d \}^{d+1}}N(V_{s(1)}\cap E;U_{s(2)},\dotsc ,U_{s(d+1)} \mid T_1,T_2,\dotsc ,T_d)\neq \emptyset.$$
\end{prop}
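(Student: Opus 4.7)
Both directions of the equivalence reduce, via Lemma \ref{TR}, to studying $\Delta$-transitivity of $E^m$ in $(X^m,\mathcal{T})$; the bridge is that an open set in $X^m$ may, after refining to a basic neighbourhood, be assumed to be a product, and that the $T_i$ act diagonally on products.

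For necessity, assume $E$ is $\Delta$-weakly mixing, so $E^{m}$ is a $\Delta$-transitive subset of $(X^{m},\mathcal{T})$ for every $m$. Given $d$, pairwise distinct $T_1,\dots,T_d\in\mathcal{T}\setminus\{\theta_{\mathcal{T}}\}$, and non-empty open $V_1,\dots,V_d,U_1,\dots,U_d$ meeting $E$, take $m=d^{d+1}$, index the coordinates of $X^m$ by $s\in\{1,\dots,d\}^{d+1}$, and form
$$\mathcal{V}:=\prod_{s}V_{s(1)},\qquad \mathcal{U}_i:=\prod_{s}U_{s(i+1)}\quad(1\leq i\leq d),$$
which are non-empty open subsets of $X^{m}$ meeting $E^{m}$ coordinatewise. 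Lemma \ref{TR} produces some $n\in N(\mathcal{V}\cap E^{m};\mathcal{U}_1,\dots,\mathcal{U}_d\mid T_1,\dots,T_d)$. Since $T_i$ acts diagonally, a witness $(x_s)_s$ satisfies $x_s\in V_{s(1)}\cap E$ and $T_i^n x_s\in U_{s(i+1)}$ for every $s$ and $i$, so the same $n$ lies simultaneously in every set $N(V_{s(1)}\cap E;U_{s(2)},\dots,U_{s(d+1)}\mid T_1,\dots,T_d)$.

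For sufficiency, assume the stated intersection condition; I will verify via Lemma \ref{TR} that $E^{m}$ is $\Delta$-transitive in $(X^{m},\mathcal{T})$ for every $m\geq 1$. Fix $d'\geq 1$, pairwise distinct $T'_1,\dots,T'_{d'}\in\mathcal{T}\setminus\{\theta_{\mathcal{T}}\}$, and basic product open sets $\mathcal{V}=V'_1\times\cdots\times V'_m$ and $\mathcal{U}_i=U'_{i,1}\times\cdots\times U'_{i,m}$ in $X^{m}$, each factor meeting $E$. Choose $d:=d'm$ (so $d\geq d'$ and $d\geq m$), extend $T'_1,\dots,T'_{d'}$ to pairwise distinct $T_1,\dots,T_d\in\mathcal{T}\setminus\{\theta_{\mathcal{T}}\}$ (possible because the torsion-free assumption forces $\mathcal{T}$ to be infinite whenever it is non-trivial), set $V_j=V'_j$ for $j\leq m$ and pad the remaining $V_k$'s by any open set meeting $E$, and fix an injection $\phi:\{1,\dots,d'\}\times\{1,\dots,m\}\to\{1,\dots,d\}$ together with open sets $U_1,\dots,U_d$ satisfying $U_{\phi(i,j)}=U'_{i,j}$ (padding the remaining $U_k$'s arbitrarily among open sets meeting $E$). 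The hypothesis then yields a single $n$ belonging to the intersection over all $s\in\{1,\dots,d\}^{d+1}$.

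To extract the desired witness, for each coordinate $j\in\{1,\dots,m\}$ define $s_j\in\{1,\dots,d\}^{d+1}$ by $s_j(1)=j$ and $s_j(i+1)=\phi(i,j)$ for $i\leq d'$, with the remaining entries arbitrary. The $s_j$-th term of the intersection produces a point $x_j\in V'_j\cap E$ with $(T'_i)^n x_j\in U'_{i,j}$ for $i=1,\dots,d'$ (the constraints at indices $i>d'$ being irrelevant to us). Then $(x_1,\dots,x_m)\in\mathcal{V}\cap E^{m}$ and $(T'_i)^n(x_1,\dots,x_m)\in\mathcal{U}_i$ for every $i\leq d'$, whence $n\in N(\mathcal{V}\cap E^{m};\mathcal{U}_1,\dots,\mathcal{U}_{d'}\mid T'_1,\dots,T'_{d'})$, which, by Lemma \ref{TR}, is enough. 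The main obstacle is this last bookkeeping: one must choose $d$ large enough for all $V'_j$ and all $d'm$ sets $U'_{i,j}$ to sit inside the families $V_1,\dots,V_d$ and $U_1,\dots,U_d$ while keeping the $T_k$ pairwise distinct. The count $d=d'm$ handles the open sets, and the torsion-free hypothesis handles the transformations.
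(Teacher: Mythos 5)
Your proof is correct and follows essentially the same route as the paper's: necessity by applying $\Delta$-transitivity of $E^{m}$ with $m=d^{d+1}$ coordinates indexed by $s\in\{1,\dots,d\}^{d+1}$, and sufficiency by invoking the hypothesis at the inflated parameter $d'm$ and reading off a common time $n$ from suitably chosen tuples $s_j$. The only cosmetic difference is that you channel both directions through Lemma \ref{TR}, while the paper's necessity argument uses a transitive point from the definition directly; the content is identical.
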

\begin{proof}To prove the sufficiency, we shall to show $E^n$ is a $\Delta$-transitive subset of $(X^n,\mathcal{T})$ for any fixed $n\in\mathbb{N}$. For any $d\geq 1$, distinct $T_1,T_2,\dotsc ,T_d\in \mathcal{T}\setminus\{\theta_{\mathcal{T}}\} $, and non-empty open subsets $V_i,U_{ij}$ of $X$ intersecting $E$ for $i=1,2,\dotsc ,n$ and $j=1,2,\dotsc ,d$. Let $N=nd$ and choose pairwise distinct $T_1',T_2',\dotsc ,T_N'\in \mathcal{T}\setminus \{ \theta_{\mathcal{T}} \}$ such that $T_i=T_i'$ for $i=1,2,\dotsc ,n$, since $|\mathcal{T}|=\infty$. We rewrite $\{U_{ij}; \ i=1,2,\dotsc ,n; \ j=1,2,\dotsc ,d \}$ as
$\{U_1,U_2,\dotsc ,U_N \}$, and let $V_{kd+j}'=V_j$ for $k=0,1,\dotsc ,n-1$ and $j=1,2,\dotsc ,d$. Then one has
$$ \bigcap_{s\in \{1,2,\dotsc ,N \}^{N+1}}N(V_{s(1)}'\cap E;U_{s(2)},\dotsc ,U_{s(N+1)}| \ T_1',T_2',\dotsc ,T_N')\neq \emptyset. $$
In particular, there exists $L\in \mathbb{N}$ such that $(V_i\cap E)\cap(\bigcap_{j=1}^{d}T_j^{-L}U_{ij})\neq \emptyset$ for any $i=1,2,\dotsc ,n$. Then $E^n$ is a $\Delta$-transitive subset of $(X^n,\mathcal{T})$ by Lemma \ref{TR}.

Necessity. Suppose $E$ is a $\Delta$-weakly mixing subset of $(X,\mathcal{T})$. Fix $d\geq 1$ and let $L=|\{1,2,\dotsc d\}^{d+1}|$. Then $E^L$ is a $\Delta$-transitive subset of $(X^L,\mathcal{T})$. We can rewrite $\{1,2,\dotsc ,d\}^{d+1}=\{s^1,s^2,..,s^L\}$. For pairwise distinct $T_1,T_2,..,T_d\in\mathcal{T}\setminus\{\theta_{\mathcal{T}}\}$, and non-empty open subsets $U_1,U_2,\dotsc ,U_d$ and $V_1,V_2,..,V_d$ of $X$ intersecting $E$, there exists $x_k\in V_{s^k(1)}\cap E$ for $k=1,2,\dotsc L$ such that $L$-tuple $x=(x_1,x_2,\dotsc ,x_L)\in E^L$ satisfies
$$\overline{orb_{+}((x,x,\dotsc ,x),T_1\times T_2\times\dotsc T_d)}\supseteq E^L\times E^L\times\dotsc \times E^L\ (d\text{-times}).$$
Thus there exists $n_0\in\N$  such that $T_i^{n_0}x_k\in U_{s^k(i+1)}$ for $k=1,2,\dotsc ,L$ and $i=1,2,\dotsc ,d$, which implies that
$$n_0\in \bigcap_{s\in \{1,2,\dotsc ,d \}^{d+1}}N(V_{s(1)}\cap E;U_{s(2)},\dotsc ,U_{s(d+1)}| \ T_1,T_2,\dotsc ,T_d)\neq \emptyset.$$
This ends our proof.
\end{proof}

\section{Entropy and $\Delta$-weakly mixing subsets of nilpotent group actions }
In this section we will introduce the concept and some properties of nilpotent group. Finally, we will prove Theorem \ref{thm-A} by partially following from the argument in the proof of Theorem C in \cite{H-L-Y-Z} .
\subsection{Nilpotent group-polynomial}
A group $\mathcal{T}$ with the unit $\theta_{\mathcal{T}}$ is called \emph{nilpotent} if it has a finite sequence of normal subgroups
(\emph{a finite  central series}):
$\{\theta_{\mathcal{T}}\}\ =\mathcal{T}_0 \subset \mathcal{T}_1\subset\dotsc \subset \mathcal{T}_t=\mathcal{T} $, such that $[\mathcal{T}_i, \mathcal{T}]\subset \mathcal{T}_{i-1}$ for $i=1,2,\dotsc t$, where $[\mathcal{T}_i,\mathcal{T}]$ denotes the subgroup generated by $\{[T,S]=T^{-1}S^{-1}TS: T\in \mathcal{T}_i, \ S\in \mathcal{T}\} $. Any finitely generated nilpotent group is a factor of finitely generated torsion-free nilpotent group, thus every representation of a finitely generated nilpotent group can be lifted to a representation of a finitely generated torsion-free nilpotent group.

If $\mathcal{T}$ is a finitely generated nilpotent torsion-free nilpotent group, then there exists a subset $\{S_1, S_2,\dotsc ,S_s\}$ of $\mathcal{T}$ (\emph{Maccev basis} of $\mathcal{T}$) such that every element $T \in \mathcal{T}$ can be uniquely represented in the form
$$ T=S_1^{r_1(T)}S_2^{r_2(T)}\dotsc S_s^{r_s(T)},$$
where the mapping $r:\mathcal{T}\rightarrow Z^s$:
$$r(T)=(r_1(T),r_2(T),\dotsc ,r_s(T)),$$
such that there exist polynomial mappings $\phi : \mathbb{Z}^{s+1}\rightarrow \mathbb{Z}^s$, for any $T\in \mathcal{T}$
$$r(T^n)=\phi(r(T),n).$$

The group of $\mathcal{T}$-polynomials $P\mathcal{T}$, is the minimal subgroup of the group $\mathcal{T}^{\mathbb{Z}}$ of the mappings $\mathbb{Z}\rightarrow \mathcal{T}$ which contains constant sequences and is closed with respect to raising to integral polynomial powers: if $g,h \in P\mathcal{T}$, and $p$ is an integral polynomial (taking integer values at the integers), then $gh,\ g^p \in P\mathcal{T}$, where $gh(n)=g(n)h(n),\ g^p(n)=g(n)^{p(n)}$, $n\in \mathbb{Z}$. Then
$$\Phi_T: n\rightarrow T^n=S_1^{\phi(r(T),n)_1}S_2^{\phi(r(T),n)_2}\dotsc S_s^{\phi(r(T),n)_s}$$
is $\mathcal{T}$-polynomial, for any $T\in \mathcal{T}$.

\subsection{PET induction}
In \cite{Leibman}, the author introduced the weight $\omega(g)$ of a $\mathcal{T}$-polynomial, then for any \emph{system} (finite subset of $P\mathcal{T}$) $\mathcal{A}$ the weight vector $\omega(A)$ is defined. The set of weight vectors is well ordered, we say the system $\mathcal{A}'$ \emph{percedes} $\mathcal{A}$ if $\omega(\mathcal{A})$ grater than $\omega(\mathcal{A}')$. The \emph{PET-induction} is an induction on the well ordered set of systems, that is, if a statement is true for the system $\{\theta_{\mathcal{T}}\}$ and one can show that it holds for a system $\mathcal{A}$ from the assumption that it is true for all systems preceding $\mathcal{A}$, then we can assert this statement holds for all systems.

Using PET-induction, Leibman proved the following proposition (see \cite[Corollary 11.7]{Leibman}).
\begin{prop}\label{Leibman} Let $(X,\mathcal{T})$ be a $\mathcal{T}$-system, where $\mathcal{T}$ is a finitely generated torsion-free discrete nilpotent group and $\mu\in \mathcal{M}(X,\mathcal{T})$. If $\pi: (X,\mathcal{B}_X, \mu,\mathcal{T})\to (Y,\mathcal{B}_Y,\nu,\mathcal{T})$ is a weakly mixing extension and $\mu=\int \mu_yd\nu$ is the decomposition of the measure $\mu$ over $\nu$, then
\begin{align}\label{D-lim-e}
D-\lim_{n \to \infty} \biggl\{\int\prod_{i=1}^df_i\circ T_i^nd\mu_y-\prod_{i=1}^{d}\int f_i\circ T_i^nd\mu_y \biggr\}=0\ in\ L^1(Y),
\end{align}
for any $d\geqslant 1$, pairwise distinct $T_1,T_2,\dotsc ,T_d \in \mathcal{T}$ and $f_1,f_2,\dotsc ,f_d\in L^{\infty}(\mu)$.
\end{prop}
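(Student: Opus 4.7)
The plan is to prove the proposition via Leibman's PET (Polynomial Ergodic Theorem) induction on systems of $\mathcal{T}$-polynomials. The first step is to strengthen the statement so that it becomes amenable to induction: for every finite system $\mathcal{A} = \{g_1, g_2, \ldots, g_d\} \subset P\mathcal{T}$ whose pairwise ``differences'' $g_i g_j^{-1}$ ($i\neq j$) are non-constant, one shows
\begin{equation*}
D-\lim_{n\to\infty}\biggl\{\int \prod_{i=1}^{d} f_i \circ g_i(n)\, d\mu_y - \prod_{i=1}^{d}\int f_i \circ g_i(n)\, d\mu_y\biggr\} = 0 \text{ in } L^1(Y)
\end{equation*}
for any $f_1,\ldots,f_d \in L^\infty(\mu)$. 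The proposition is the special case $g_i(n) = \Phi_{T_i}(n) = T_i^n$: since the $T_i$ are pairwise distinct and $\mathcal{T}$ is torsion-free, the element $T_i^n T_j^{-n}$ is trivial at $n=0$ but not at $n=1$, so the pairwise differences are non-constant as required.

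The induction runs on the weight vector $\omega(\mathcal{A})$ defined using the central series of $\mathcal{T}$. The base case is the minimal system, for which the conclusion is trivial. For the inductive step, one selects a distinguished polynomial $g_{i_0}\in\mathcal{A}$ so that the ``shifted'' system
$\mathcal{A}'_h = \bigl\{g_j(n+h)g_{i_0}(n)^{-1},\; g_j(n)g_{i_0}(n)^{-1} : j=1,\ldots,d\bigr\}\setminus\{\theta_{\mathcal{T}}\}$
has strictly smaller weight vector than $\mathcal{A}$ for every $h\neq 0$.

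The analytic step then invokes van der Corput's lemma. After a standard multilinearity reduction one may assume each $f_i$ has fibrewise conditional mean zero, so the statement reduces to showing $D-\lim_n \bigl\|\prod_i f_i\circ g_i(n)\bigr\|_{L^2(\mu)}^2 = 0$. Expanding the inner product $\bigl\langle\prod_i f_i\circ g_i(n+h),\,\prod_i f_i\circ g_i(n)\bigr\rangle_{L^2(\mu)}$ and translating the integration variable by $g_{i_0}(n)^{-1}$ exhibits it as an integral over the fibre square $(X\times_Y X,\mu\times_Y\mu)$ of a product of functions composed with precisely the polynomials comprising $\mathcal{A}'_h$. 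By Proposition \ref{TWM}, the projection $\pi\circ\pi_1 : X\times_Y X \to Y$ is itself a weakly mixing extension, so the inductive hypothesis applies on the fibre square and gives $D-\lim_n\bigl\langle\cdot,\cdot\bigr\rangle = 0$ for each fixed $h\neq 0$. Van der Corput's lemma then forces $D-\lim_n \bigl\|\prod_i f_i\circ g_i(n)\bigr\|_{L^2(\mu)}^2 = 0$, and disintegrating $\mu = \int\mu_y\,d\nu$ converts this to the desired $L^1(Y)$ convergence.

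The main obstacle is the weight machinery itself: one must define a partial order on finite subsets of $P\mathcal{T}$ that is well-ordered (so PET-induction is legitimate) and that strictly decreases under the van der Corput reduction $\mathcal{A} \mapsto \mathcal{A}'_h$. Making this work requires a careful analysis of how the filtration $\{\theta_{\mathcal{T}}\} = \mathcal{T}_0 \subset \mathcal{T}_1 \subset \cdots \subset \mathcal{T}_t = \mathcal{T}$ interacts with polynomial exponents, and constitutes the combinatorial heart of Leibman's argument; the measure-theoretic input (weak mixing of $\pi$, encoded via Proposition \ref{TWM}) enters only at the final van der Corput step.
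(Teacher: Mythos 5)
The paper does not prove this proposition at all: it is quoted verbatim as Corollary 11.7 of Leibman's paper \emph{Multiple recurrence theorem for measure preserving actions of a nilpotent group} (reference \cite{Leibman}), so there is no in-paper argument to compare against. Your outline is a reasonable reconstruction of the strategy Leibman actually uses --- PET induction on a well-ordered set of weight vectors of systems of $\mathcal{T}$-polynomials, a van der Corput step, and the fact (Proposition \ref{TWM}) that the relative square $X\times_Y X$ over $Y$ is again a weakly mixing extension, which is what allows the inductive hypothesis to be applied after the reduction. Your verification that the polynomials $n\mapsto T_i^nT_j^{-n}$ are non-constant for $T_i\neq T_j$ (using torsion-freeness only implicitly; distinctness already suffices at $n=1$) is also the right way to see that the proposition is an instance of the polynomial statement.

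There are, however, two genuine gaps. First, a concrete error in the analytic step: after reducing to the case where each $f_i$ has zero conditional expectation on $Y$, the quantity that must tend to zero in density is $\bigl\|E\bigl(\prod_{i=1}^d f_i\circ g_i(n)\mid Y\bigr)\bigr\|_{L^2(\nu)}^2$, \emph{not} $\bigl\|\prod_{i=1}^d f_i\circ g_i(n)\bigr\|_{L^2(\mu)}^2$ as you wrote. The latter does not tend to zero in general: take $f_i=f$ with $f$ taking values $\pm 1$ and $E(f\mid Y)=0$; then $\|\prod_i f\circ g_i(n)\|_{L^2(\mu)}^2\equiv 1$, while the conclusion of the proposition can still hold. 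It is precisely the identity $\|E(F\mid Y)\|_{L^2(\nu)}^2=\int F\otimes F\,d(\mu\times_Y\mu)$ that produces the integral over the fibre square to which the inductive hypothesis is applied; expanding the plain $L^2(\mu)$ inner product as you propose yields an integral over $X$, not over $X\times_Y X$. Second, you explicitly defer the combinatorial core --- the definition of the weight of a system, the well-ordering, and the verification that the shifted system $\mathcal{A}'_h$ strictly precedes $\mathcal{A}$ --- which is where essentially all of the work in Leibman's Sections on PET induction lies. As it stands the proposal is a plan for a proof, with one step stated incorrectly and the hardest step acknowledged but not carried out; given that the paper itself treats the result as known, the honest course is to cite \cite[Corollary 11.7]{Leibman} as the paper does, or else to supply the weight machinery in full.
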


Here, for a sequence of points $\{z_n\}_{n=1}^{\infty}$ in a topological space $Z$ and $z\in Z$,
$D$-$\lim_{n\to+\infty}z_n=z$ means that $\{z_n\}_{n=1}^{\infty}$ converges to $z$ in density, that is, for every neighborhood $V$ of $z$ in $Z$, $z_n\in V$ for all $n$ except a set of zero density.
It is clear that if (\ref{D-lim-e}) holds then for any $\varepsilon>0$ and $0<\delta <1$, the collection of $n$ satisfying
$$\nu\biggl(\biggl\{y\in Y: \biggl|\int \prod_{i=1}^df_i\circ T_i^nd\mu_y-\prod_{i=1}^{d}\int f_i\circ T_i^nd\mu_y\biggr| <\varepsilon \biggr\}\biggr) >1-\delta. $$
  has density one.
\subsection{Proof of Theorem {\ref{thm-A}}}
The following lemma (see \cite[Theorem NM']{Leibman}) and propositions will be used in the proof of Theorem \ref{thm-A}.

\begin{lem}\label{Leibman2}
Let $(Y,\mathcal{D},\nu,\mathcal{T})$ be a measure preserving system, where $\mathcal{T}$ is a countable discrete nilpotent group.
Then for any $d\in\N$, $A\in\mathcal{D}$ with $\nu(A)>0$ and $T_1,T_2,\dotsc ,T_d\in\mathcal{T}$, one has
$$\liminf_{N \to \infty}\frac{1}{N}\sum_{n=0}^{N-1}\nu\biggl(\bigcap_{i=1}^{d}T_i^{-n}A\biggr)>0.$$
\end{lem}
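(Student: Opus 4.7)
The plan is to follow the Furstenberg--Zimmer structure theorem strategy adapted to nilpotent groups via PET induction, which is Leibman's argument in \cite{Leibman}. Since $\mathcal{T}$ is nilpotent and hence amenable, $\nu$ admits an ergodic decomposition $\nu=\int\nu_\omega\,d\lambda(\omega)$, and the set of $\omega$ with $\nu_\omega(A)>0$ has positive $\lambda$-measure. Applying Fatou's lemma, it suffices to establish the conclusion for every ergodic measure preserving $\mathcal{T}$-system, so I would first make this reduction.

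On an ergodic system I would invoke the Furstenberg--Zimmer structure theorem to obtain a (possibly transfinite) tower of $\mathcal{T}$-invariant factors in which each successor extension is either compact or weakly mixing, and limit stages are inverse limits. I would then prove, by transfinite induction along this tower, the statement that for every set of positive measure at the current level, the Cesaro liminf of $\nu(\bigcap_{i=1}^{d} T_i^{-n}A)$ is strictly positive. The base case of the trivial one-point system is immediate, and the inverse limit step is handled by standard $L^2$ approximation.

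The two substantive cases are the compact and weakly mixing extension steps. For a weakly mixing extension, Proposition \ref{Leibman} applied with $f_i = 1_A$ shows that, on a density-one set of $n$, the conditional multicorrelation $\int \prod_{i} 1_A \circ T_i^n \, d\mu_y$ is $L^1(\nu)$-close to $\prod_{i}\mu_y(T_i^{-n}A)$; integrating over $y$ and using the inductive hypothesis on the factor to control the fiber measures yields the required positive Cesaro lower bound. For a compact extension one splits $1_A$ into a fiberwise almost periodic component and a fiberwise relatively weakly mixing component; the latter contributes negligibly, while the former is controlled via Leibman's PET induction on the system of $\mathcal{T}$-polynomials $\{\Phi_{T_1},\dots,\Phi_{T_d}\}$ expressed in Maccev coordinates.

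The main obstacle is the compact extension step combined with the PET induction. Unlike the $\Z$-case, where one inducts on powers of a single transformation, here one must organize the induction over the well-ordered set of weight vectors of a polynomial system, reducing any given system to preceding ones via van der Corput type manipulations applied to the coordinate polynomials $\phi(r(T_i),n)$. This is the technical heart of \cite{Leibman}, and for the purposes of our paper we would cite Theorem NM$'$ there rather than reproduce the full PET argument.
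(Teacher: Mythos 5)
The paper gives no proof of this lemma at all: it is quoted directly as Theorem NM$'$ of \cite{Leibman}, which is exactly where your proposal also ends up. Your sketch of the ergodic decomposition, the Furstenberg--Zimmer tower with compact/weakly mixing steps, and the PET induction on weight vectors is a fair outline of Leibman's argument, so the proposal is correct and takes essentially the same route as the paper, namely citing Theorem NM$'$ for the substantive content.
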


\begin{prop}\label{PM}
Let $(X,\mathcal{T})$ be a $\mathcal{T}$-system, where $\mathcal{T}$ is a finitely generated torsion-free discrete nilpotent group and $\mu\in\mathcal{M}(X,\mathcal{T})$. Let $\pi:(X,\mathcal{B}_X, \mu,\mathcal{T})\to (Y,\mathcal{B}_Y,\nu,\mathcal{T})$ be a weakly mixing extension and $\mu=\int \mu_yd\nu$ be the decomposition of the measure $\mu$ over $\nu$.
For any positive integers $k$ and $M$, pairwise distinct $T_1,T_2,\dotsc ,T_k\in \mathcal{T}\setminus\{\theta_{\mathcal{T}}\}$,
if $A_1,A_2,\dotsc ,A_M\in \mathcal{B}_X$  satisfies that
$$\Omega:=\{ y\in Y: \mu_y(A_i)>0,\ for\ all\ i=1,2,\dotsc ,M \}$$
has positive $\nu$-measure, then
we can find $L\in\N$ and $c>0$ such that
$$\Omega':=
\biggl\{ y\in Y: \mu_y\biggl(A_{s(1)}\cap \bigcap_{i=1}^{k}T_i^{-L}A_{s(i+1)}\biggr)>c, \text{ for all } s\in \{1,2,\dotsc ,M\}^{k+1}\biggr \} $$
has positive $\nu$-measure.
\end{prop}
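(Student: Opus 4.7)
The plan is to combine Leibman's weakly mixing extension formula (Proposition \ref{Leibman}) with the nilpotent multiple recurrence inequality (Lemma \ref{Leibman2}) applied on the base $(Y,\mathcal{B}_Y,\nu,\mathcal{T})$. Leibman's theorem yields a density-one set of times $n$ along which the joint fibre measure essentially factorises into individual fibre measures at the shifted parameters $T_i^n y$; Lemma \ref{Leibman2} forces a positive-lower-density set of times along which all these parameters simultaneously return to a ``good'' set where every $\mu_y(A_i)$ is bounded below; since a density-one set meets any positive lower density set, a suitable $L$ exists.

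To start, since $\mu_y(A_i)>0$ for every $y\in\Omega$ and $i=1,\ldots,M$, a standard exhaustion argument supplies $a>0$ and a measurable $\Omega_0\subset\Omega$ with $\nu(\Omega_0)>0$ such that $\mu_y(A_i)>a$ for all $y\in\Omega_0$ and all $i=1,\ldots,M$. For each tuple $s\in\{1,\ldots,M\}^{k+1}$, apply Proposition \ref{Leibman} with $d=k+1$ to the pairwise distinct elements $\theta_{\mathcal{T}},T_1,\ldots,T_k$ and to the indicator functions $1_{A_{s(1)}},1_{A_{s(2)}},\ldots,1_{A_{s(k+1)}}$. Using the equivariance $T\mu_y=\mu_{Ty}$, the identity $\int 1_B\circ T_i^n\,d\mu_y=\mu_{T_i^ny}(B)$ turns the conclusion into
\begin{equation*}
D\text{-}\lim_{n\to\infty}\Bigl\{\mu_y\bigl(A_{s(1)}\cap\bigcap_{i=1}^kT_i^{-n}A_{s(i+1)}\bigr)-\mu_y(A_{s(1)})\prod_{i=1}^k\mu_{T_i^ny}(A_{s(i+1)})\Bigr\}=0\ \text{in}\ L^1(Y).
\end{equation*}
In particular, for any $\varepsilon>0$ the set $G_s(\varepsilon)$ of $n\in\N$ for which the $L^1(Y,\nu)$-norm of the above bracket is less than $\varepsilon$ has density one, and hence so does the intersection $G(\varepsilon):=\bigcap_sG_s(\varepsilon)$ over the finitely many tuples $s$.

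On the base side, Lemma \ref{Leibman2} applied to $(Y,\mathcal{B}_Y,\nu,\mathcal{T})$ with $A=\Omega_0$ produces $\gamma>0$ with
\begin{equation*}
\liminf_{N\to\infty}\frac{1}{N}\sum_{n=0}^{N-1}\nu\Bigl(\Omega_0\cap\bigcap_{i=1}^kT_i^{-n}\Omega_0\Bigr)\geq\gamma,
\end{equation*}
so by the density remark in Subsection 2.2 the set $F:=\{n\in\Z_+:\nu(\Omega_0\cap\bigcap_{i=1}^kT_i^{-n}\Omega_0)>\gamma/2\}$ has positive lower density. Now fix $\varepsilon:=\gamma a^{k+1}/(8M^{k+1})$ and pick any $L\in F\cap G(\varepsilon)$, which exists because $G(\varepsilon)$ has density one. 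The set $W_L:=\Omega_0\cap\bigcap_iT_i^{-L}\Omega_0$ has $\nu$-measure exceeding $\gamma/2$; Markov's inequality on each $G_s(\varepsilon)$ shows that the set of $y$ where the approximation error exceeds $a^{k+1}/2$ has $\nu$-measure less than $2\varepsilon/a^{k+1}=\gamma/(4M^{k+1})$, so the union over all $s\in\{1,\ldots,M\}^{k+1}$ of these bad sets has $\nu$-measure less than $\gamma/4$. On the complement of this union inside $W_L$, of $\nu$-measure at least $\gamma/4>0$, every $y$ satisfies $\mu_y(A_{s(1)})>a$ and $\mu_{T_i^Ly}(A_{s(i+1)})>a$ (since $y,T_i^Ly\in\Omega_0$), so
\begin{equation*}
\mu_y\bigl(A_{s(1)}\cap\bigcap_{i=1}^kT_i^{-L}A_{s(i+1)}\bigr)>a^{k+1}-\frac{a^{k+1}}{2}=:c
\end{equation*}
for every $s$, and this subset is contained in $\Omega'$.

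The main obstacle is to keep the constants uniform over all $M^{k+1}$ tuples $s$ at once while coupling the two very different kinds of largeness --- density one on the fibre side, positive lower density on the base side --- into a single integer $L$; this is resolved by the elementary fact that any density-one set meets any set of positive lower density, together with a careful choice of $\varepsilon$ small enough that the $s$-bad sets add up to a small fraction of $\nu(W_L)$.
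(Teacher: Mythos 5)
Your argument is correct and follows essentially the same route as the paper: pass to a sub-level set $\Omega_0$ where all $\mu_y(A_i)$ are uniformly bounded below, apply Lemma \ref{Leibman2} on the base to get a positive-lower-density set of good return times for $\Omega_0$, apply Proposition \ref{Leibman} (with the identity adjoined to $T_1,\dotsc,T_k$) to get a density-one set of times where the fibre measures approximately factorise, and intersect the two. The only cosmetic difference is that you extract the pointwise smallness of the error from the $L^1$ convergence via Markov's inequality, whereas the paper invokes its stated remark following Proposition \ref{Leibman}; these are equivalent.
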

\begin{proof}
For every $p\in\N$, let
\[\Omega_p=\Bigl\{y\in Y: \mu_y(A_i)>\tfrac{1}{p}, \text{ for all }i=1,2,\dotsc ,M \Bigr \}.\]
It is clear that $\Omega=\bigcup_{p=1}^{\infty}\Omega_p $.
As $\nu(\Omega)>0$, there exists some $p\in \N$ such that $\nu(\Omega_p)>0$.
By Lemma \ref{Leibman2} there exists $\lambda>0$ such that
$$ \liminf_{N \rightarrow \infty}\frac{1}{N}\sum_{n=0}^{N-1}
\nu\biggr(\Omega_p\cap\bigcap_{i=1}^{k}T_i^{-n}\Omega_p\biggr)> \lambda,$$
then $E:=\{ n\in\N: \nu(\Omega_p\cap\bigcap_{i=1}^{k}T_i^{-n}\Omega_p)> \lambda \}$ has positive lower density.

Fix $0<\varepsilon<\frac{1}{p^{k+1}}$ and $0<\delta<\frac{\lambda}{M^{k+1}}$.
For any $s\in\{1,2,\dotsc ,M\}^{k+1}$, let
\[A_s^n=A_{s(1)}\cap \bigcap_{i=1}^{k}T_i^{-n}A_{s(i+1)}\]
and $F_s$ be the collection of $n$ such that
\begin{align}\label{e-2}
\nu\biggl(\biggl\{y\in Y\colon \biggl| \mu_y(A_s^n)-\mu_y(A_{s(1)})
\prod_{i=1}^k\mu_{(T_i^Ly)}(A_{s(i+1)})\biggr|<\varepsilon \biggr\}\biggr)>1-\delta.
\end{align}
Then by Proposition \ref{Leibman} $D(F_s)=1$ for any $s\in\{1,2,\dotsc ,M\}^{k+1}$. Thus $F:=\bigcap_{s\in\{1,2,\dotsc ,M\}^{k+1}}F_s$ has density $1$ and $E\cap F\neq\emptyset$. We can pick $L\in E\cap F$ and let
\begin{align*}
V=\biggl\{y\in Y\colon :& \biggl|\mu_y(A_s^L)-\mu_y(A_{s(1)})\prod_{i=1}^k\mu_{(T_i^Ly)}(A_{s(i+1)})\biggr| <\varepsilon,\\
&\qquad\qquad \qquad\quad\text{ for all }s\in \{1,2,\dotsc ,M \} ^{k+1}\biggr\}
\end{align*}
and
\[H=V\cap \Omega_p\cap T_1^{-L}\Omega_p\cap \dotsb \cap T_k^{-L}\Omega_p.\]
Then by (\ref{e-2}) and $L\in E$ one has
$$\mu(V)>1-M^{k+1}\delta \text{ and } \nu(H)>\lambda-M^{k+1}\delta>0.$$
Now we pick $0<c<\frac{1}{p^{k+1}}-\varepsilon$ such that for any
$y\in H$, we have
$$\mu_y(A_s^L)>\mu_y(A_{s(1)})\prod_{i=1}^{k}\mu_{(T_i^Ly)}(A_{s(i+1)})-\varepsilon>\frac{1}{p^{k+1}}-\varepsilon>c,$$
for any $s\in\{1,2,\dotsc ,M\}^{k+1}$. This finishes our proof.
\end{proof}

Now let's prove Theorem \ref{thm-A}, by partially following from the arguments in the proof of Theorem $C$ in \cite{H-L-Y-Z}.
\begin{proof}[Proof of the Theorem \ref{thm-A}]
Since $h_{top}(X,\mathcal{T})>0$, there exists $\mu\in\mathcal{M}^e(X,\mathcal{T})$ such that $h_{\mu}(X,\mathcal{T})>0$.
Let $\pi:(X,\mathcal{B}_X,\mu,\mathcal{T})\to(Z,\mathcal{B}_Z,\nu,\mathcal{T})$ be the factor map to the Pinsker factor of $(X,\mathcal{B}_X,\mu,\mathcal{T})$.
By Proposition \ref{Pinsker}, we know $\pi$ is a weakly mixing extension.
Let $\mu=\int\mu_zd\nu$ be the decomposition of the measure $\mu$ over $\nu$.
Since $\mathcal{T}$ is countable,
we can enumerate $\mathcal{T}$ as $\{\theta_{\mathcal{T}},T_1,T_2,\dotsc \}$.

Let $\lambda=\mu\times_Z\mu$. Then by Proposition \ref{TWM}, $\widetilde{\pi}:=\pi_1\circ\pi:(X\times X,\mathcal{B}_X\times \mathcal{B}_X,\lambda,\mathcal{T})\to (Z,\mathcal{B}_Z,\nu,\mathcal{T})$ is also a weakly mixing extension, where $\pi_1:X\times X\to X$ is the projection to the first coordinate. Moreover, $\lambda(\Delta_X)=0$, where $\Delta_X=\{(x,x): x\in X\}$ (see e.g. \cite[Lemma 4.3]{Huang-Xu-Yi}). Then we can pick $(x_1,x_2)\in \supp(\lambda)\setminus\triangle_X$, and choose disjoint closed neighborhood $W_i$ of $x_i$ such that diam($W_i$)$<\frac{1}{2}$ for $i=1,2$ and
$$\lambda(W_1\times W_2)=\int{\mu_z}(W_1)\mu_z(W_2)d\nu(z)>0.$$

Let $\Omega=\{z\in Z: \mu_z(W_i)>0 \text{ for }i=1,2 \}$.
Then $\nu(\Omega)>0$.
We can find $c_1>0$  such that
$\Omega_1:=\{z\in Z: \mu_z(W_i)>c_1\text{ for }i=1,2\}$
has positive $\nu$-measure. Now we denote
$\mathscr{E}_1=\{1,2\}$,
$\mathscr{E}_2=\mathscr{E}_1\times \mathscr{E}_1$, $\dotsc$, $\mathscr{E}_k=\mathscr{E}_{k-1}\times\mathscr{E}_{k-1}\times\dotsc \times\mathscr{E}_{k-1}$ ($k$-times) for any $k\geq 3$.
Let $A_i=W_i$ for $i\in\mathscr{E}_1$. Then by induction and Proposition \ref{PM} we can construct a non-empty closed subset $A_\sigma$ of $X$ for each $\sigma \in \mathscr{E}_k$, $k\in\N$ with the following properties:
\begin{enumerate}
  \item for any $k>1$, there exists $n_k\in\N$ , and a non-empty closed subset $A_{\sigma}$ of $X$ for any $\sigma=(\sigma(1),\sigma(2),\dotsc ,\sigma(k))\in \mathscr{E}_{k}$, where $\sigma(i)\in \mathscr{E}_{k-1}, i=1,2,\dotsc ,k$, such that
$$A_\sigma\subset A_{\sigma(1)}\cap T_1^{-n_k}A_{\sigma(2)}\cap \dotsc  \cap T_{k-1}^{-n_k}A_{\sigma(k)};$$
  \item diam$( A_{\sigma}) < 2^{-k}$, for all $\sigma\in \mathscr{E}_{k}$, $k\in\N$;
  \item for any $k\in\N$, there exists $c_k>0$ such that
$$\{z\in Z: \mu_z(A_{\sigma})>c_k, \text{ for all }\sigma\in \mathscr{E}_{k}\}$$
has positive $\nu$-measure.
\end{enumerate}
Let $A=\bigcap_{k=1}^{\infty}\bigcup_{\sigma\in \mathscr{E}_{k}}A_{\sigma}$.
Now we shall show that $A$ is a $\Delta$-weakly mixing subset of $(X,\mathcal{T})$. Note that for any given $k\in\N$,
$\{A_{\sigma}\colon \sigma\in\mathscr{E}_k\}$ are pairwise disjoint because of property (1).
Thus $A$ is a Cantor set. For any $d \geq 1$, pairwise distinct
$T_1',T_2',\dotsc ,T_d'\in \mathcal{T}\setminus\{\theta_{\mathcal{T}}\}$, and non-empty open subsets $U_1,U_2,\dotsc ,U_d$ and $V_1,V_2,\dotsc ,V_d$ of $X$ intersecting $A$,  by Proposition \ref{d-WM} it  is suffice to show that
\begin{align}\label{e000}
\bigcap_{j=1}^{d}\bigcap_{s\in\{1,2,\dotsc ,d\}^d}N(V_j\cap A;U_{s(1)},\dotsc ,U_{s(d)}\mid T_1,T_2,\dotsc ,T_d)\neq\emptyset.
\end{align}

We pick an integer $L$ large enough  such that $\{T_1',T_2',\dotsc ,T_d'\}\subset\{T_1,T_2,\dotsc ,T_L\}$ and there exists pairwise distinct $\sigma^1,\sigma^2,\dotsc ,\sigma^d$, ${\sigma^1}',{\sigma^2}',\dotsc ,{\sigma^d}'$ in $\mathscr{E}_L$, such that $A_{\sigma^i}\subseteq U_i$ and $A_{{\sigma^i}'}\subseteq V_i$, for $i=1,2,\dotsc ,d$. Without loss of generality, we can assume $T_i'=T_i$ for $i=1,2,\dotsc ,d$.
Then there exists $n_{L+1}\in\N$  such that
$$A_{\sigma}\subset A_{\sigma(1)}\cap T_1^{-n_{L+1}}A_{\sigma(2)}\cap\dotsc \cap T_{L}^{-n_{L+1}}A_{\sigma(L+1)}\neq\emptyset,$$
for any $\sigma=\{\sigma(1),\sigma(2),\dotsc ,\sigma(L+1)\}\in\mathscr{E}_{L+1}$. In particularly, for any $j=1,2,\dotsc d$ and $s\in\{1,2,\dotsc ,d\}^d$, let
$\sigma_{js}=\{{\sigma^j}',\sigma^{s(1)},\dotsc ,\sigma^{s(d)},\eta^1,\dotsc ,\eta^{L-d}\}\in\mathscr{E}_{L+1}$, where $\eta^1,\eta^2,\dotsc ,\eta^{L-d}$ is any $L-d$ elements of $\mathscr{E}_{L}$. Then
$$A_{\sigma_{js}}\subset A_{{\sigma^j}'}\cap T_1^{-n_{L+1}}A_{\sigma^{s(1)}}\cap\dotsc \cap T_{d}^{-n_{L+1}}A_{\sigma^{s(d)}}.$$
Since $A_{\sigma_{js}}\cap A\neq \emptyset$, there exists
$$v_{js}\in A\cap A_{{\sigma^j}'}\cap T_1^{-n_{L+1}}A_{\sigma^{s(1)}}\cap\dotsc \cap T_{d}^{-n_{L+1}}A_{\sigma^{s(d)}}.$$
Thus $v_{js}\in A\cap V_j\cap T_1^{-n_{L+1}}U_{s(1)}\cap\dotsc \cap T_d^{-n_{L+1}}U_{s(d)}$, for any $j=1,2,\dotsc ,d$ and $s\in\{1,2,\dotsc ,d\}^d$.
Thus (\ref{e000}) holds, which ends the proof.
\end{proof}

\begin{rem}\label{solvable}
Furstenberg introduced the following example (see \cite[P.40]{F}). Let $X=\{-1,1\}^\Z$, $T$ be the shift map: $T\omega(n)=\omega(n+1)$
and  $R:X\rightarrow X$ be defined by:
\[ R\omega(n)=\begin{cases}
\omega(n),& for\ n=0 \\
-\omega(n),& for\ n\neq 0.
\end{cases}\]
It is clear that $R^2=id_X$.
Let $S=RTR$. Then $S^n=RT^nR$  for any $n\in\N$.
The group $G$ generated by $T$ and $R$ is a solvable group.
For any $\omega\in X$, let
$U_{\omega}=\{x\in X \colon  x(0)=\omega(0)\}$.
Then $T^n\omega\in U_{\omega}$ if and only if $\omega(n)=\omega(0)$, and $S^n\omega\in U_{\omega}$ if and only if $\omega(n)=-\omega(0)$. Hence
$(\omega,\omega)\notin\overline{orb_+((\omega,\omega),T\times S)}$. Thus for any closed subset $E$ of $X$ with $|E|\geq 2$, any $\omega\in E$, we have
$$\overline{orb_+((\omega,\omega), T\times S)}\nsupseteq E^2.$$
This implies that there is no $\Delta$-transitive subsets in $(X,G)$. For this group $G$, there exists a finitely generated torsion-free discrete solvable group $F$ with the unit $e_F$ such that there is a surjective homomorphism $\pi: F\to G$.
Let $(X,F)$ be the $F$-system, where the group actions is defined as
$$f(\omega)=\pi(f)(\omega), \text{ for } f\in F \text{ and }\omega\in X. $$
Taking $T_F$ and $S_F\in F$ such that $\pi(T_F)=T$ and $\pi(S_F)=S$, one has
$(\omega,\omega)\notin\overline{orb_+((\omega,\omega),T_F\times S_F)}$ for any $\omega\in X$.
Thus there is no $\Delta$-transitive subset in $(X,F)$.
\end{rem}

From Remark \ref{solvable}, we can obtain the following proposition.

\begin{prop}\label{solvable2} There exists a $F$-system $(Z,F)$, where $F$ is a finitely generated torsion-free discrete solvable group  such that $h_{top}(Z,F)>0$ but there are no $\Delta$-transitive subsets in $(Z,F)$.
\end{prop}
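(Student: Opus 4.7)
The plan is to take the system $(X,F)$ from Remark \ref{solvable} and couple it with a Bernoulli $F$-shift to inject positive topological entropy, while preserving the Furstenberg obstruction that rules out $\Delta$-transitive subsets. With $F$ as in Remark \ref{solvable}, let $Y=\{-1,1\}^F$ be the full $F$-shift (with $F$ acting by left translation), and set $Z:=X\times Y$ equipped with the diagonal $F$-action, so that $F$ acts on the first coordinate through the surjection $\pi\colon F\to G$ and on the second by translation.

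First I would verify $h_{top}(Z,F)>0$. Since $F$ is finitely generated and solvable, hence amenable, the full Bernoulli $F$-shift on the alphabet $\{-1,1\}$ has topological entropy $\log 2$, computed with any F\o{}lner sequence of $F$; this is a standard fact for amenable group actions. Because the coordinate projection $Z\to Y$ is an $F$-equivariant topological factor map and factor maps do not increase topological entropy, one obtains $h_{top}(Z,F)\geq h_{top}(Y,F)=\log 2>0$.

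To show $(Z,F)$ admits no $\Delta$-transitive subset, assume for contradiction that $E\subseteq Z$ is closed with $|E|\geq 2$ and is $\Delta$-transitive, and let $A\subseteq E$ be the residual (hence nonempty) subset provided by the definition. Fix any $z=(\omega,\eta)\in A$ and pick $T_F,S_F\in F$ with $\pi(T_F)=T$ and $\pi(S_F)=S$; since $T\neq S$ in $G$, the elements $T_F$ and $S_F$ are distinct nonidentity elements of $F$. The $\Delta$-transitivity condition at $d=2$ gives $\overline{orb_+((z,z),T_F\times S_F)}\supseteq E^2\ni(z,z)$, producing a sequence $n_k\in\N$ with $T_F^{n_k}z\to z$ and $S_F^{n_k}z\to z$ (or a single positive $n$ with both orbits returning exactly to $z$). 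Projecting to the $X$-coordinate and reading off the $0$-th component, the identities $(T^n\omega)(0)=\omega(n)$ and $(S^n\omega)(0)=-\omega(n)$ for $n\neq 0$ from Remark \ref{solvable} force $\omega(n_k)=\omega(0)$ and $\omega(n_k)=-\omega(0)$ simultaneously for large $k$, the desired contradiction.

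I expect the only delicate point to be the invocation of the standard amenable-group entropy facts (the entropy of the Bernoulli shift and its nonincrease under factor maps), both of which should be cited rather than re-derived. Everything else follows essentially immediately from Remark \ref{solvable}, because the Furstenberg obstruction is detected purely on the $X$-factor and therefore survives multiplication by the entropy-carrying auxiliary $Y$.
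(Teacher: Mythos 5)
Your proposal is correct and follows essentially the same route as the paper: both take the product of the Furstenberg-type system $(X,F)$ from Remark \ref{solvable} with a full Bernoulli $F$-shift, get positive entropy from the Bernoulli factor, and kill $\Delta$-transitivity by projecting to the $X$-coordinate and using that $(\omega,\omega)\notin\overline{orb_+((\omega,\omega),T_F\times S_F)}$. The only cosmetic differences are that the paper computes the shift's entropy directly from a F\o{}lner sequence rather than citing it, and cites Remark \ref{solvable} for the obstruction rather than re-deriving the $0$-th coordinate identities as you do.
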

\begin{proof} Let $(X,F)$ be the $F$-system as in the Remark \ref{solvable},  and $Y=\{0,1\}^F$.
$F$ acts on $Y$ as $(gy)(h)=y(hg^{-1})$ for any $g,h\in F$ and $y\in Y$.
Now we consider the product $F$-system $(X\times Y,F)$.
We will show that $h_{top}(X\times Y, F)>0$ and there is no $\Delta$-transitive subsets in $(X\times Y,F)$.

Let $\mathcal{U}_0=\{[0],[1]\}$, where $[i]=\{y\in Y\ | y(e_F)=i \}$ for $i=0,1$. Then
$g^{-1}\mathcal{U}_0=\{[0]_g, [1]_g\}$, where $[i]_g=\{y\in Y\ |\ y(g)=i \}$ for $i=0,1$.
Let $\{F_n\}_{n=1}^{\infty}$ be a F\o{}lner sequence of $F$. Then
$$h_{top}(F,\mathcal{U}_0)=\lim_{n\to\infty}\frac{H({\mathcal{U}_{F_n}})}{|F_n|}=\lim_{n\to\infty}\frac{\log2^{|F_n|}}{|F_n|}=\log 2.$$
Thus $h_{top}(X\times Y,F)\geq h_{top}(Y,F)\geq \log2>0$.

If there exists a $\Delta$-transitive subset $E$ of $(X\times Y, F)$, then there exists $(\omega,y)\in E$ such that
$$\overline{orb_+(((\omega,y),(\omega,y)),T_F\times S_F)}\supseteq E^2. $$
In particularly, $(\omega,\omega)\in \overline{orb_+((\omega,\omega),T_F\times S_F)}$, which contradicts the Remark \ref{solvable}.
Thus $F$-system $(Z,F):=(X\times Y,F)$ has no $\Delta$-transitive subsets.
\end{proof}

\section{proof of theorem \ref{thm-B}}
In this section, firstly we present some properties of a $\Delta$-weakly mixing subset in a $\mathcal{T}$-system $(X,\mathcal{T})$, and then we will prove Theorem \ref{thm-B} and Corollary \ref{corollary}. These arguments in this section partially follows from the proof of the Theorem A in \cite{H-L-Y-Z}.

Let $(X,\mathcal{T})$ be a $\mathcal{T}$-system, where $\mathcal{T}$ is a countable torsion-free discrete group and $E$ is a closed subset of $X$.
For any $\varepsilon >0$,  $d\geq1$ and pairwise distinct $T_1,T_2,\dotsc T_d\in\mathcal{T}\setminus\{\theta_{\mathcal{T}}\}$, we say that a subset $A$ of $X$ is \emph{$(\varepsilon,T_1,T_2,\dotsc ,T_d)$-spread in $E$} if there are
$0<\delta<\varepsilon$, $m\in \mathbb{N}$ and pairwise distinct $z_1,z_2,\dotsc z_m\in X$ such that $A\subset \bigcup_{i=1}^mB(z_i,\delta)$ and for any maps $g_j:\{z_1,z_2,\dotsc ,z_m\}\to E $ for $j=1,2,\dotsc d$, there exists an integer $L>\frac{1}{\varepsilon}$ such that
$$T_j^LB(z_i,\delta)\subset B(g_j(z_i),\varepsilon)$$
for any $i=1,2,\dotsc ,m$ and $j=1,2,\dotsc ,d$.
Denote by $\mathcal{H}(\varepsilon,T_1,T_2,\dotsc ,T_d;E)$ the collection of all closed subsets of $X$ that are $(\varepsilon,T_1,T_2,\dotsc ,T_d)$-spread in $E$. Put
$$\mathcal{H}(E)=\bigcap_{d=1}^{+\infty}\bigcap_{\{T_1,T_2,\dotsc, T_d\}\subset\mathcal{T} \setminus \{\theta_{\mathcal{T}}\}} \bigcap_{k=1}^{+\infty}\mathcal{H}(\tfrac{1}{k},T_1,T_2,\dotsc ,T_d;E).$$
It is clear that $\mathcal{H}(E)$ is a  hereditary subset of $2^X$.
\begin{prop}\label{C_i} Let $(X,\mathcal{T})$ be a $\mathcal{T}$-system, where $\mathcal{T}$ is a countable torsion-free discrete group, and $E$ be a perfect subset of $X$.
If there exists an increasing sequence of subsets $C_1\subset C_2\subset\dotsc $ in $\mathcal{H}(E)\cap 2^E$ such that $C=\bigcup_{i=1}^{\infty}C_i$ is dense in $E$, then $E$ is a $\Delta$-weakly mixing subset of $X$.
\end{prop}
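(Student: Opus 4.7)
The plan is to verify the combinatorial criterion of Proposition \ref{d-WM}. Fix $d \geq 1$, pairwise distinct $T_1, \ldots, T_d \in \mathcal{T} \setminus \{\theta_{\mathcal{T}}\}$, and non-empty open subsets $U_1, \ldots, U_d, V_1, \ldots, V_d$ of $X$ each meeting $E$; my goal is to exhibit a single $n \in \N$ lying in $N(V_{s(1)} \cap E; U_{s(2)}, \ldots, U_{s(d+1)} \mid T_1, \ldots, T_d)$ for every $s$ in the finite index set $\mathcal{S} := \{1, \ldots, d\}^{d+1}$.

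First, I would pick target points $u_i \in U_i \cap E$ for $i = 1, \ldots, d$ and a common $\varepsilon_0 > 0$ with $B(u_i, \varepsilon_0) \subset U_i$ for every $i$. Next, using that $E$ is perfect and $C = \bigcup_i C_i$ is dense in $E$, I would select pairwise distinct source points $\{x_s : s \in \mathcal{S}\} \subset C$ with $x_s \in V_{s(1)}$; this is possible because each $V_{s(1)} \cap E$ is a non-empty open subset of the perfect space $E$, so $C \cap V_{s(1)}$ is infinite and one can extract $|\mathcal{S}|$ distinct elements. Let $\eta > 0$ be smaller than half the minimum pairwise distance between the $x_s$'s, and choose $k$ large enough that $\{x_s : s \in \mathcal{S}\} \subset C_k$.

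Now I would invoke $C_k \in \mathcal{H}(E)$ with a parameter $\varepsilon < \min(\varepsilon_0, \eta)$: this produces $0 < \delta < \varepsilon$ and pairwise distinct centers $z'_1, \ldots, z'_m$ with $C_k \subset \bigcup_l B(z'_l, \delta)$ enjoying the property that any prescription of maps $g_j : \{z'_1, \ldots, z'_m\} \to E$ is realized by a common integer $L > 1/\varepsilon$ via $T_j^L B(z'_l, \delta) \subset B(g_j(z'_l), \varepsilon)$. Because $2\delta < 2\eta$ is strictly less than the pairwise separation of the $x_s$'s, distinct $x_s$ must lie in distinct cover balls; let $l_s$ denote the unique index with $x_s \in B(z'_{l_s}, \delta)$. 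I would then define $g_j(z'_{l_s}) := u_{s(j+1)}$ for every $s \in \mathcal{S}$ and $j \in \{1, \ldots, d\}$, assigning arbitrary $E$-values at the remaining centers. The spread property then furnishes $L$ satisfying $T_j^L x_s \in B(u_{s(j+1)}, \varepsilon) \subset U_{s(j+1)}$ for every pair $(s, j)$, and since $x_s \in V_{s(1)} \cap E$, this $L$ is a common witness lying in every $N(V_{s(1)} \cap E; U_{s(2)}, \ldots, U_{s(d+1)} \mid T_1, \ldots, T_d)$.

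The one delicate point is arranging that the sources $\{x_s\}$ separate into distinct balls of the spread cover, since otherwise we could not independently prescribe targets for each $s$. This is handled by fixing the separation scale $\eta$ from the finite collection $\{x_s\}$ \emph{first} and then shrinking the spread parameter $\varepsilon$ (hence $\delta$) below $\eta$, which is permitted because $C_k$ is $(\varepsilon, T_1, \ldots, T_d)$-spread in $E$ for every $\varepsilon > 0$.
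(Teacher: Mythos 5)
Your argument is correct and follows essentially the same route as the paper's proof: reduce to the criterion of Proposition \ref{d-WM}, choose pairwise distinct source points in $C\cap V_{s(1)}$ (possible since $E$ is perfect and $C$ is dense in $E$), fix the separation scale of these finitely many points \emph{before} invoking the spread property of a single $C_k$ containing them so that they fall into distinct cover balls, and then prescribe the target maps to land in the $U_i$'s. The only differences are notational (you index sources by $s\in\{1,\dots,d\}^{d+1}$ where the paper uses pairs $(i,k)$), so there is nothing substantive to add.
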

\begin{proof}
For any $d\geq1$, pairwise distinct $T_1,T_2,\dotsc ,T_d\in\mathcal{T}\setminus\{\theta_{\mathcal{T}}\}$, and non-empty open subsets $U_1,U_2,\dotsc ,U_d$, $V_1,V_2,\dotsc ,V_d$ of $X$ intersecting $E$, by
Proposition \ref{d-WM},
it is sufficient to show that
\begin{align}\label{e-3}
\bigcap_{i=1}^{d}\bigcap_{k=1}^{M}N(V_i\cap E;U_{s^k(1)},U_{s^k(2)},\dotsc ,U_{s^k(d)})\neq\emptyset,
\end{align}
where $M=|\{1,2,\dotsc ,d\}^{d}|$ and enumerate $\{1,2,\dotsc ,d\}^d$ as $\{s^1,s^2,\dotsc ,s^M\}$.

Since $U_i\cap E\neq\emptyset$, there exist $u_i\in U_i\cap E$ and $\varepsilon>0$ such that $B(u_i,\varepsilon)\subset U_i$ for   $i=1,2,\dotsc ,d$.
Since $V_i\cap E\neq\emptyset$ for $i=1,2,\dotsc ,d$, $E$ is perfect and $C$ is dense in $E$, we can pick
$v_{i1},v_{i2},\dotsc ,v_{iM}\in V_i\cap C$ for $i=1,2,\dotsc ,d$  such that $v_{ik}\neq v_{i'k'}$ whenever $(i,k)\neq(i',k')\in\{1,2,\dotsc ,d\}\times\{1,2,\dotsc ,M\}$.
Then we take an integer $K_0 $ large enough such that
$v_{ik}\in C_{K_0}$ for all $i=1,2,\dotsc ,d$ and $k=1,2,\dotsc ,M$.

Let $a_0=\min\{\rho(v_{ik},v_{{i'}{k'}}): (i,k)\neq (i',k'), 1\leq i,i'\leq d, 1\leq k,k'\leq M\}$.
Then $a_0>0$.
Take $0<\varepsilon_1<\min\{\frac{a_0}{2},\frac{\varepsilon}{2}\}$. Since $C_{K_0}\in\mathcal{H}(E)$,
there exist $0<\delta<\varepsilon_1$, $m\in\N$ and pairwise distinct $z_1,z_2,\dotsc ,z_m\in X$ such that $C_{K_0}\subset\bigcup_{i=1}^{m}B(z_i,\delta)$ and for any maps $g_j:\{z_1,z_2,\dotsc ,z_m\}\to E$, $j=1,2,\dotsc ,d$, there exists an integer $L>\frac{1}{\varepsilon_1}$ such that $T_j^LB(z_i,\delta)\subseteq B(g_j(z_i),\varepsilon_1)$ for any $i=1,2,\dotsc ,m$ and $j=1,2,\dotsc ,d$.

Now we can pick $n_{ik}\in\{1,2,\dotsc ,m\}$ for $i=1,2,\dotsc ,d$ and $k=1,2,\dotsc ,M$, such that $v_{ik}\in B(z_{n_{ik}},\delta)$. Since $\delta<\frac{a_0}{2}$, one has $z_{n_{ik}}\neq z_{n_{i'k'}}$ whenever $(i,k)\neq(i',k')\in\{1,2,\dotsc ,d\}\times\{1,2,\dotsc ,M\}$.
Thus $Md\leq m$. For $j=1,2,\dotsc ,d$,  we define $h_j:\{z_1,z_2,\dotsc ,z_m\}\to E$  as
\[h_j{(z_p)}=\begin{cases}
u_{s^k(j)},&\text{ if there exists }z_{n_{ik}}\text{ such that } z_p=z_{n_{ik}},\\
u_{d},& \text{others}.
\end{cases}\]
Then there exists $L^{\ast}\in\N$ such that $T_j^{L^{\ast}}B(z_p,\delta)\subseteq B(h_j(z_p),\varepsilon_1)$ for any $j=1,2,\dotsc ,d$ and $p=1,2,\dotsc ,m$. In particularly, for any $i=1,2,\dotsc ,d$ and $k=1,2,\dotsc ,M$,
$$T_j^{L^{\ast}}B(z_{n_{ik}},\delta)\subset B(u_{s^k(j)},\varepsilon_1)\subset U_{s^k(j)},$$
and so $v_{ik}\in T_j^{-L^{\ast}}U_{s^k(j)}$. Thus for any $1\leq k\leq M$ and $1\leq i\leq d$
$$L^{\ast}\in N(V_i\cap E; U_{s^k(1)},U_{s^k(2)},\dotsc ,U_{s^k(d)} \mid T_1,T_2,\dotsc ,T_d)\neq\emptyset,$$
that is (\ref{e-3}) holds. This ends the proof.
\end{proof}

\begin{thm}\label{H(E)}Let $(X,\mathcal{T})$ be a $\mathcal{T}$-system, where $\mathcal{T}$ is a countable torsion-free discrete group. If $E$ is a closed subset of $X$ with $|E|\geq2$, then $E$ is $\Delta$-weakly mixing if and only if $E$ is perfect and
$2^E\cap \mathcal{H}(E)$ is a residue subset of $2^E$.
\end{thm}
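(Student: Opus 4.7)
The plan is to prove the biconditional in two parts: sufficiency via the Kuratowski--Mycielski machinery already packaged in Lemma \ref{K_M} and Proposition \ref{C_i}; necessity via a dense-open argument in $2^E$, where the main technical point is a compactness-based discretization.

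\emph{Sufficiency.} Assuming $E$ is perfect and $Q:=\mathcal{H}(E)\cap 2^E$ is residual in $2^E$, observe that $Q$ inherits the heredity of $\mathcal{H}(E)$. Lemma \ref{K_M} applied to the perfect compact metric space $E$ then produces an increasing sequence of Cantor subsets $C_1\subset C_2\subset\dotsb$ of $E$ with each $C_i\in Q\subset \mathcal{H}(E)$ and $\bigcup_i C_i$ dense in $E$; Proposition \ref{C_i} concludes that $E$ is $\Delta$-weakly mixing.

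\emph{Necessity.} Suppose $E$ is $\Delta$-weakly mixing. Then $E$ is perfect by Remark \ref{remark 1}. Since $\mathcal{T}$ is countable, $\mathcal{H}(E)\cap 2^E$ is the countable intersection of the sets $\mathcal{H}(\tfrac{1}{k},T_1,\dotsc,T_d;E)\cap 2^E$, so it suffices to prove each such set is open and dense in $2^E$. Openness is routine: given witnesses $(\delta,z_1,\dotsc,z_m)$ for $A$, the compactness of $A$ yields $\eta>0$ with $A\subset\bigcup_i B(z_i,\delta-\eta)$; then any $A'\in 2^E$ within Hausdorff distance $\eta$ of $A$ is covered by $\bigcup_i B(z_i,\delta)$, so the same witnesses certify the spreadness of $A'$ (the existence statement about $L$ depends only on $\delta$ and the $z_i$, not on $A$).

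The core step is density. Given a basic open set $\langle U_1,\dotsc,U_n\rangle$ of $2^E$ with $U_i\cap E\neq\emptyset$, use $\Delta$-transitivity of $E^n$ in $(X^n,\mathcal{T})$ to pick distinct $x_i\in U_i\cap E$ with $(x_1,\dotsc,x_n)$ in the corresponding residual subset of $E^n$; this is possible because $E$ is perfect. Set $A=\{x_1,\dotsc,x_n\}$. To show $A\in \mathcal{H}(\tfrac{1}{k},T_1,\dotsc,T_d;E)$, the obstacle is the quantifier order in the definition of spreadness: $\delta$ must be chosen before seeing the target maps $g_j$, while $\Delta$-transitivity of $E^n$ only supplies an integer $L$ after a specific target configuration is fixed. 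To bypass this, fix a finite $\tfrac{1}{4k}$-net $W$ of $E$ and let $\Sigma$ be the \emph{finite} collection of $d$-tuples $\sigma=(\tilde g_1,\dotsc,\tilde g_d)$ of maps $A\to W$. For each $\sigma\in\Sigma$, the density of the orbit of $(x_1,\dotsc,x_n)$ in $(E^n)^d$ (which forces the orbit to hit each nonempty open set infinitely often) yields $L_\sigma>k$ with $\rho(T_j^{L_\sigma}x_i,\tilde g_j(x_i))<\tfrac{1}{4k}$ for all $i,j$, and uniform continuity of the finitely many homeomorphisms $T_j^{L_\sigma}$ then gives $\delta_\sigma>0$ with $T_j^{L_\sigma}B(x_i,\delta_\sigma)\subset B(T_j^{L_\sigma}x_i,\tfrac{1}{4k})$. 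Setting $\delta=\min_\sigma\delta_\sigma<\tfrac{1}{k}$ and, for arbitrary $g_j\colon A\to E$, approximating each $g_j(x_i)$ within $\tfrac{1}{4k}$ by some $\tilde g_j(x_i)\in W$ and using $L=L_{(\tilde g_1,\dotsc,\tilde g_d)}$, a triangle-inequality chain gives $T_j^L B(x_i,\delta)\subset B(g_j(x_i),\tfrac{1}{k})$. Hence $A$ is spread, and $A\in\langle U_1,\dotsc,U_n\rangle$, as required.
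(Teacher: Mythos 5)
Your proof is correct and follows essentially the same route as the paper: sufficiency via Lemma \ref{K_M} and Proposition \ref{C_i}, and necessity by showing each $\mathcal{H}(\tfrac{1}{k},T_1,\dotsc,T_d;E)\cap 2^E$ is open and dense, using a transitive point of $E^n$ together with a discretization of the possible target maps by a finite net of $E$. The only (immaterial) difference is in the density step: you take the finite set $\{x_1,\dotsc,x_n\}$ itself as the witness and obtain the uniform $\delta$ as a minimum over the finitely many net-configurations, whereas the paper takes a union of small closed balls around these points and produces $\delta$ by inductively shrinking neighborhoods over the same finite list of configurations.
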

\begin{proof}
Sufficiency. If $E$ is perfect and $ 2^E\bigcap \mathcal {H}(E)$ is a residue subset of $2^E$, then we can immediately obtain that $E$ is a $\Delta$-weakly mixing subset of $(X,\mathcal{T})$  by Lemma \ref{K_M} and Proposition \ref{C_i}, since $ 2^E\bigcap \mathcal {H}(E)$ is also a hereditary subset of $2^E$.

Necessity. Suppose $E$ is a $\Delta$-weakly mixing subset of $(X,\mathcal{T})$. By Remark \ref{remark 1}, $E$ is perfect.
To show that
$\mathcal{H}(E)\cap2^E$ is a residue subset of $2^E$,
it is suffice to show that for any given $\varepsilon>0$, $d\geq 1$ and pairwise distinct $T_1,T_2,\dotsc ,T_d \in \mathcal{T} \setminus\{\theta_{\mathcal{T}}\}$, one has
$\mathcal{H}(\varepsilon, T_1,T_2,\dotsc ,T_d;E)\cap 2^E$ is a dense open subset of $2^E$.

For any $A \in \mathcal{H}(\varepsilon, T_1,T_2,\dotsc ,T_d;E)$, by the definition there exist $\delta\in(0,\varepsilon)$, $m\in\N$, and pairwise distinct points $z_1,z_2,\dotsc ,z_m \in X$ such that
$$A\in\langle B(z_1,\delta),B(z_2,\delta),\dotsc,B(z_m,\delta)\rangle\subseteq\mathcal{H}(\varepsilon,T_1,T_2,\dotsc ,T_d;E).$$
Thus $\mathcal{H}(\varepsilon,T_1,T_2,\dotsc ,T_d;E)$$\cap 2^E$ is an open subset of $2^E$.

Now we shall show that for any fixed $n\in\N$ and non-empty open subsets $U_1,$ $U_2,\dotsc ,U_n$ of $X$ intersecting $E$,
\begin{align}\label{e-7}
\langle U_1,U_2,\dotsc ,U_n\rangle\cap\mathcal{H}(\varepsilon,T_1,T_2,\dotsc ,T_d;E)\cap 2^E\neq\emptyset.
\end{align}
This implies $\mathcal{H}(E)\cap 2^E$ is dense in $2^E$.

Since $E$ is a $\Delta$-weakly mixing subset of $(X,\mathcal{T})$ and $U_i\cap E\neq\emptyset$ for $i=1,2,\dotsc ,n$,
there exists $u_i\in U_i\cap E$ for $i=1,2,\dotsc ,n$ such that the orbit closure of $d$-tuple $(u,u,\dotsc ,u)$ under the action $T_1\times T_2\times\dotsc \times T_d$ contains $E^n\times E^n\times\dotsc \times E^n$ ($d$-times), that is
$$\overline{orb_{+}((u,u,\dotsc ,u),T_1\times T_2\times\dotsc \times T_d)}\supseteq E^n\times E^n\times\dotsc \times E^n\ (d\text{-times}),$$
where $u=(u_1,u_2,\dotsc ,u_n)$.
Since $X$ is compact and $E$ is a closed subset of $X$, there exists $m_0\in \mathbb{N}$ and $z_1,z_2,\dotsc ,z_{m_0} \in X$ such that
$E\subset \bigcup_{i=1}^{m_0}B(z_i,\frac{1}{2}\varepsilon)$ and $B(z_i,\frac{1}{2}\varepsilon)\cap E\neq \emptyset$ for any $1\leq i\leq m_0$.
We can arrange the $d$-tuple on the set $\{ 1,2,\dotsc ,m_0\}^n$ as the finite sequence $ \{\alpha^1,\alpha^2,\dotsc ,\alpha^L \}$, where $\alpha^k=(\alpha_1^k,\alpha_2^k,\dotsc ,\alpha_d^k)$ and $\alpha_j^k\in\{1,2,\dotsc ,m_0\}^n$ for $k=1,2,\dotsc ,L$ and $j=1,2,\dotsc ,d$.
For $\alpha^1$, there exists $n_1 \in \mathbb{N}$ such that $T_j^{n_1}(u_i)\in B(z_{\alpha_j^1(i)},\frac{1}{2}\varepsilon)$ for $i=1,2,\dotsc ,n$ and $j=1,2,\dotsc ,d$. Moreover, since $T_1,T_2,\dotsc ,T_d$ are continuous, we can find a neighborhood $W_i^1$ of $u_i$ such that $W_i^1\subset U_i$ and
$$T_j^{n_1}(W_i^1)\subset B(z_{\alpha_{j}^1(i)},\tfrac{1}{2}\varepsilon),$$
for any $i=1,2,\dotsc ,n$ and $j=1,2,\dotsc ,d$.
Then replacing $U_i$ by $W_i^1$ for $i=1,2,\dotsc ,n$, we can obtain $n_2\in\N$ and a neighborhood $W_i^2$ of $u_i$ such that $W_i^2\subset W_i^1$ and $T_j^{n_2}(W_i^2)\subset B(z_{\alpha _j^2(i)},\frac{1}{2}\varepsilon)$  for $i=1,2,\dotsc ,n$ and $j=1,2,\dotsc ,d$. We continue inductively obtaining positive integers $n_3,n_4\dotsc ,n_L$ and non-empty open subsets $W_i^k$ of $X$ intersecting $E$ such that
\begin{align}\label{e-4}
W_i^L\subset W_i^{L-1}\subset\dotsc \subset W_i^1 \subset U_i,\ T_j^{n_k}(W_i^k)\subset B(z_{\alpha_j^k(i)},\frac{\varepsilon}{2}),
\end{align}
for $i=1,2,\dotsc ,n$, $j=1,2,\dotsc ,d$ and $k=1,2,\dotsc ,L$.

Now we take $\omega_i\in W_i^L\cap E$, and $0<\delta<\frac{\varepsilon}{2}$ such that $B(\omega_i,2\delta)\subset W_i^L$ for any $i=1,2,\dotsc ,n$.
Let $W=\bigcup_{i=1}^{n}\overline{B(\omega_i,\delta)}\subset \bigcup_{i=1}^nB(\omega_i,2\delta)$.
 Then $W\in\langle U_1,U_2,\dotsc ,U_n\rangle$.
For any maps
$g_j:\{\omega_1,\omega_2,\dotsc ,\omega_n\}\to E$ with $j=1,2,\dotsc ,d$, there exists $1\leq h \leq L$ such that
\begin{align}\label{e-5}
g_j(\omega_i)\in B(z_{\alpha^h_j(i)},\tfrac{1}{2}\varepsilon),
\end{align}
for any $i=1,2,\dotsc ,n$ and $j=1,2,\dotsc ,d$.
Combining (\ref{e-4}) and (\ref{e-5}), one has
$$ T_j^{n_h}B(\omega_i,2\delta)\subset B(z_{\alpha^h_j(i)},\tfrac{1}{2}\varepsilon)\subset B(g_j(\omega_i),\varepsilon),$$
for $i=1,2,\dotsc ,n$ and $j=1,2,\dotsc ,d$.
Thus $W\cap E\in \mathcal{H}(\varepsilon,T_1,T_2,\dotsc ,T_d;E)\cap E$, and (\ref{e-7}) holds. This finishes the proof.
\end{proof}

Now let us prove Theorem \ref{thm-B}.
\begin{proof}[Proof of Theorem \ref{thm-B}]Let $E$ be a closed subset of $X$ with $|E|\geq 2$. Suppose $E$ is a $\Delta$-weakly mixing subset of $(X,\mathcal{T})$.
Then $E$ is perfect and $\mathcal{H}(E)\cap 2^E$ is a residue subset of $2^E$ by Theorem \ref{H(E)}.
Since $\mathcal{H}(E)\cap 2^E$ is also a hereditary subset of $2^E$, there exists an increasing sequence of Cantor sets $C_1\subset C_2\subset\dotsc \subset E$ such that $C_i\in \mathcal{H}(E)\cap 2^E$ and $C=\bigcup_{i=1}^{\infty}C_i$ is dense in $E$ by Lemma \ref{K_M}.

Let $A$ be a subset of $C$, $d\geq 1$, pairwise distinct $T_1,T_2,\dotsc T_d\in\mathcal{T}\setminus\{\theta_{\mathcal{T}}\}$, and $g_j:A \to E$ be continuous maps for $j=1,2,\dotsc,d$. For any $k\in\N$, let $A_k=C_k\cap A$. Then the closure $\overline{A_k}$ of $A_k$ is also in $\mathcal{H}(E)$, since $\mathcal{H}(E)$ is hereditary.

By the definition of $\mathcal{H}(E)$, there exists $0<\delta_k<\frac{1}{k}$, $m_k\in\N$, and $z_1^k,z_2^k,\dotsc,z_{m_k}^k\in X$, such that
$$\overline{A_k}\subset\bigcup_{i=1}^{m_k}B(z_i^k,\delta_k)$$
with $A_k\cap B(z_i^k,\delta_k)\neq\emptyset$, $i=1,2,\dotsc,m_k$, and for any maps $h_j:\{z_1^k,z_2^k,\dotsc ,z_{m_k}^k\}\to E$, $j=1,2,\dotsc,d$, there exists $L>k$ such that $T_j^LB(z_i^k,\delta_k)\subseteq B(h_j(z_i^k),\frac{1}{k})$, for any $i=1,2,\dotsc,m_k$ and $j=1,2,\dotsc,d$.

For any $i=1,2,\dotsc,m_k$, there exists $u^k_i\in A_k\cap B(z_i^k,\delta_k)$. Now we define $\widetilde{g_j}:\{z_1^k,z_2^k,\dotsc,z_{m_k}^k\}\to E $ as $\widetilde{g_j}(z_i^k)=g_j(u_i^k)$, for any $j=1,2,\dotsc,d$ and $i=1,2,\dotsc,m_k$. Then there exists $q_k>k$ such that $T_j^{q_k}B(z_i^k,\delta_k)\subset B(\widetilde{g}(z_i^k),\frac{1}{k})$, for any $j=1,2,\dotsc,d$ and $i=1,2,\dotsc,m_k$. We show that the sequence $\{q_k\}$ is as required.

For any $x\in A$, there exists $K_0\in\N$ such that for any $k>K_0$ $x\in A_k$. For any $k>K_0$, there exists $z_{i_x}^k\in\{z_1^k,z_2^k,\dotsc,z_{m_k}^k\}$ such that $x\in B(z_{i_x}^k,\delta_k)$. Then for any $j=1,2,\dotsc,d$ we have
\begin{align*}
\lim_{k\to+\infty}\rho(T_j^{q_k}x,g_j(x))&\leq\lim_{k\to+\infty} \big(\rho(T_j^{q_k}x,\widetilde{g}_{j}(z_{i_x}^k))+\rho(\widetilde{g}_{j}(z_{i_x}^k),g_j(x))\big)\\
&\leq \lim_{k\to+\infty}\frac{1}{k}+\lim_{k\to+\infty}\rho(g_{j}(u_{i_x}^k),g_j(x))\\
&=\lim_{k\to+\infty}\rho(g_{j}(u_{i_x}^k),g_j(x)).
\end{align*}
Since $\rho(x,u_{i_x}^k)<\frac{2}{k}$ for any $k>K_0$, and $g_j$ is continuous for $j=1,2,\dotsc,d$, $$\lim_{k\to+\infty}\rho(g_{j}(u_{i_x}^k),g_j(x))=0.$$ 
Thus $\lim_{k\to+\infty}T_j^{q_k}x=g_j(x)$ for any $x\in A_k$, which ends the proof of necessity.

Sufficiency. For any $d\geq 1$, pairwise distinct $T_1,T_2,\dotsc ,T_d\in\mathcal{T}\setminus\{\theta_{\mathcal{T}}\}$, non-empty open subsets $U_1,U_2,\dotsc ,U_d$ and $V_1,V_2,\dotsc ,V_d$ of $X$ intersecting $E$, by Proposition \ref{d-WM} we need to show that
\begin{align}\label{e-6}
\bigcap_{s\in\{1,2,\dotsc ,d\}^{d+1}}N(V_{s(1)}\cap E;U_{s(2)},\dotsc ,U_{s(d+1)})\neq\emptyset.
\end{align}

Let $M=|\{1,2,\dotsc ,d\}^d|$ and enumerate $\{1,2,\dotsc ,d\}^d$ as $\{s^1,s^2,\dotsc ,s^M\}$.
Since $U_i\cap E\neq\emptyset$, there exists $u_i\in U_i\cap E$ and $\varepsilon>0$ such that $B(u_i,\varepsilon)\subset U_i$ for any $i=1,2,\dotsc ,d$.
Since $V_i\cap E\neq\emptyset$ for $i=1,2,\dotsc ,d$, $E$ is perfect, and $C$ is dense in $E$,
we can pick $v_{i1},v_{i2},\dotsc ,v_{iM}\in (V_i\cap E)\cap C$ for $i=1,2,\dotsc ,d$ such that $v_{il}\neq v_{i'l'}$ whenever $(i,l)\neq (i',l')\in\{1,2,\dotsc ,d\}\times\{1,2,\dotsc ,M\}$.

Let $A:=\{v_{il}: 1\leq i\leq d, 1\leq l\leq M\}$. Then $A$ is a subset of $C$. We define $g_j: A\to E$ as $g_j(v_{il})=u_{s^l(j)}$, for any $i,j=1,2,\dotsc ,d$ and $l=1,2,\dotsc ,M$. Then there exists an increasing sequence $\{q_k\}_{k=1}^{+\infty}$ of positive integers
such that
$$\lim_{k\to\infty}T_j^{q_k}v_{il}=g_j(v_{il})=u_{s^l(j)}\in U_{s^l(j)},$$
for any $i,j=1,2,\dotsc ,d$ and $l=1,2,\dotsc ,M$.
Thus we can pick $k_0\in\N$ large enough such that $v_{il}\in T_j^{-q_{k_0}}U_{s^l(j)}$ for any $1\leq i,j\leq d$ and $1\leq l\leq M$ that is (\ref{e-6}) holds. This ends the proof.
\end{proof}
Now we will prove Corollary \ref{corollary}.
\begin{proof}[Proof of Corollary \ref{corollary}] Since $h_{top}(X,\mathcal{T})>0$, there exists a $\Delta$-weakly mixing subset $E$ of $(X,\mathcal{T})$ by Theorem \ref{thm-A}. Then $E$ is perfect by Remark \ref{remark 1}, and by Theorem \ref{thm-B} there exists increasing sequence of Cantor subsets $C_1\subset C_2\subset\dotsc $ of $E$  such that $C=\bigcup_{i=1}^{\infty}C_i$ is dense in $E$  and satisfies the property in Theorem \ref{thm-B}. Since $|E|\geq 2$, we can pick distinct $e_1,e_2\in E$, and let
$\delta =\rho(e_1,e_2)>0$.

Given two distinct points $x,y\in C$  and $T_1,T_2\in\mathcal{T}\setminus\{\theta_{\mathcal{T}}\}$, there are two cases.

Case 1: $T_1=T_2=T$ for some $T\in\mathcal{T}\setminus\{\theta_{\mathcal{T}}\}$. Let $g:\{x,y\}\to E$ with $g(x)=g(y)=e_1$, and $g':\{x,y\}\to E$ with $g'(x)=e_1$, $g'(y)=e_2$. Then by the property in Theorem \ref{thm-B}, there exist two increasing sequences $\{p_k\}_{k=1}^{+\infty}$ and $\{p_k'\}_{k=1}^{+\infty}$ of positive integers  such that
$$\lim_{k\to\infty}\rho(T^{p_k}x,T^{p_k}y)=\rho(g(x),g(y))=0,$$ $$\lim_{k\to\infty}\rho(T^{p_k'}x,T^{p_k'}y)=\rho(g'(x),g'(y))=\delta.$$

Case 2: $T_1\neq T_2$. Let $g_1:\{x,y\}\to E$ with $g_1(x)=g_1(y)=e_1$, and $g_2:\{x,y\}\to E$ with $g_2(x)=g_2(y)=e_1$. Then by the property in Theorem \ref{thm-B}, there exists an increasing sequence $\{q_k\}_{k=1}^{+\infty}$ of positive integers such that
$$\lim_{k\to\infty}\rho(T_1^{q_k}x,T_2^{q_k}y)=\rho(g_1(x),g_2(y))=0. $$
Next let $g_1':\{x,y\}\to E$ with $g_1'(x)=g_1'(y)=e_1$, and $g_2':\{x,y\}\to E$ with $g_2'(x)=g_2'(y)=e_2$. Then by the property in Theorem \ref{thm-B}, there exists an increasing sequence $\{q_k'\}_{k=1}^{+\infty}$ of positive integers such that
$$\lim_{k\to\infty}\rho(T_1^{q_k'}x,T_2^{q_k'}y)=\rho(g_1'(x),g_2'(y))=\delta.$$

Summing up, one has $\liminf_{n\to+\infty}\rho(T_1^nx,T_2^ny)=0$ and $\limsup_{n\to+\infty}\rho(T_1^nx,T_2^y)$ $\geq\delta$ for any $x\neq y\in C$ and $T_1,T_2\in\mathcal{T}\setminus\{\theta_{\mathcal{T}}\}$. Thus $(X,\mathcal{T})$ is asynchronous chaotic.
\end{proof}

\bibliographystyle{amsplain}

\begin{thebibliography}{10}
\bibitem{Akin}
E. Akin, Lectures on Cantor and Mycielski sets for dynamical systems, Chapel Hill Ergodic Theory Workshops, 21-79. Contemp. Math., 356. Amer. Math. Soc., Providence, RI, 2004.

\bibitem{B_H}
F. Blanchard and W. Huang, Entropy sets, weakly mixing sets and entropy capacity, Discrete Contin. Dyn. Syst. 20(2008), no.2, 275--311.

\bibitem{B-G-K-M}
F. Blanchard, E. Glasner, S. Kolyada, A. Maass, On Li¨CYorke pairs, J. Reine Angew. Math. 547 (2002) 51¨C6

\bibitem{Chen-Li-Lv}
J. Chen, J. Li and J. L\"u, On multi-transitivity with respect to a vector, Sci. China Math. 57(2014), no.8, 1639--1648.

\bibitem{Chen-Li-Lv2}
Z. Chen, J. Li and J. L\"u, Point transitivity, $\Delta$- transitivity and multi-minimality, Ergodic Theory Dynam. Systems. 35(2015), no.5, 1423--1442.

\bibitem{Connes-Feldman-Weiss}
A. Connes, J. Feldman and B. Weiss, An amenable equivalence relation is generated by a single transformation, Ergodic Theory Dynam. Systems. 1(1981), no.4, 431--450.

\bibitem{Danilenko}
A.I. Danilenko, Entropy theory from the orbital point of view, Monatsh. Math. 134(2001), no.2, 121--141.

\bibitem{Dinaburg}
E.I. Dinaburg, A correlation between topological entropy and metric entropy, Dokl. Akad. Nauk. SSSR 190(1970), 19--22.

\bibitem{Emerson}
W.R. Emerson, The pointwise ergodic theorem for amenable group, Amer. J. Math. 96(1974), 472--487.

\bibitem{F}
H. Furstenberg, Recurrence in ergodic theory and combinatorial number theory, Princeton Univ. Press, Princeton, NJ., 1981.

\bibitem{FW}
H. Furstenberg and B. Weiss, Topological dynamics and combinatorial number theorey, J. Anal. Math. 34(1978), 61--85.

\bibitem{Glasner}
E. Glasner, Topological ergodic decompositions and applications to products of powers of a minimal transformation, J. Anal. Math. 64(1994), 241--262.


\bibitem{Glasner-Thouvenot-Weiss}
E. Glasner, J.P. Thouvenot and B. Weiss, Entropy theory without a past, Ergodic Theory Dynam. Systems 20(2000), no.5, 1355--1370.

\bibitem{Goodman}
T.N.T. Goodman, Relating topological entropy and measure entropy, Bull. London Math. Soc. 3(1971), no.3, 176--180.

\bibitem{Goodwyn}
L.W. Goodwyn, topological entropy bounds measures-theoretic entropy, Proc. Amer. Math. Soc. 23(1969), 679--688.

\bibitem{H-L-Y-Z}
W. Huang, J. Li, X. Ye and X. Zhou, Positive topological entropy and $\Delta$-weakly mixing sets, Adv. Math. 306(2017), 653--683.

\bibitem{Huang-Shao-Ye}
W. Huang, S. Shao and X. Ye, Topological correspondence of multiple ergodic averages of nilpotent group actions, J. Anal. Math., to appear, arxiv: 1604.07113.

\bibitem{Huang-Xu-Yi}
W. Huang, L. Xu and Y. Yi, Asymptotic pairs, stable sets and chaos in positive entropy systems, J. Funct. Anal. 268(2015), no.4, 824--846.

\bibitem{H-Y-Z}
W. Huang, X. Ye and G. Zhang, Local entropy theory for a countable discrete amenable group action, J. Funct. Anal. 261(2011), no.4, 1028--1082.

\bibitem{Kammeyer-Rudolph}
J. Kammeyer and D.J. Rudolph, Restricted orbit equivalence for actions of discrete amenable groups, Cambridge Tracts in Mathematics, 146. Cambridge University Press, Cambridge, 2002.

\bibitem{Kerr-Li2}
D. Kerr and H. Li, Combinatorial independence and sofic entropy. Commun. Math. Stat. 1(2013), no.2, 213¨C257. 37E05.

\bibitem{Kerr-Li}
D. Kerr and H. Li, Independence in topological and C*-dynamics. Math. Ann. 338(2007), no.4, 869--926.

\bibitem{Kieffer}
J. Kieffer, A generalized Shannon-McMillan theorem for the action of an amenable group on a probability space, Ann. Probab. 3(1975), no.6, 1031--1037.

\bibitem{Kwietniak-Li-Oprocah-Ye}
D. Kwietniak, J. Li, P. Oprocah and X. Ye, Multi-recurrence and van der Waerden systems, Sci. China Math. 60(2017), no.1, 59--82.

\bibitem{Kwietniak-Oprocah}
D. Kwietniak and P. Oprocah, On weak mixing, minimality and weak disjointness of all iterates, Ergodic Theorey Dynam. Systems. 32(2012), no.5, 1661--1672.

\bibitem{Leibman0}
A. Leibman, Multiple recurrence theorem for nilpotent group actions, Geom. Funct. Anal. 4(1994), no.6, 648--659.

\bibitem{Leibman}
A. Leibman, Multiple recurrence theorem for measure preserving actions of a nilpotent group, Geom. Funct. Anal. 8(1998), no.5, 853--931.

\bibitem{Li-Yorke}
T.Y. Li and J.A. Yorke, Period three implies chaos, Amer. Math. Month. 82(1975), 985-992.

\bibitem{Lindenstrauss}
E. Lindenstrauss, Pointwise theorems for amenable groups, Invent. Math. 146(2001), no.2, 259--295.

\bibitem{Lindenstrauss-Weiss}
E. Lindenstrauss and B. Weiss, Mean topological dimension, Israel J. Math. 115(2000), 1--24.

\bibitem{Moothathu}
T.K.S. Moothathu, Diagonal points having dense orbit, Colloq. Math. 120(2010), no.1, 127--138.

\bibitem{Ollagnier-Pinchon}
L.M. Ollagnier and D. Pinchon, The variational principle, Studia Math. 72(1982), no.2, 151--159.

\bibitem{Ornstein-Weiss}
D.S. Ornstein and B. Weiss. Entropy and isomorphism theorems for actions of amenable groups. J. Anal. Math. 48(1987), 1--141.

\bibitem{Parry}
W. Parry, Entropy and generators in ergodic theory, W. A. Benjamin, Inc., New York-Amsterdam, 1969.

\bibitem{Rudolph}
D.J. Rudolph and B. Weiss, Entropy and mixing for amenable group actions. Ann. of Math. (2) 151(2000), no.3, 1119--1150.

\bibitem{Stepin-Tagi-Zade}
A.M. Stepin and A.T. Tagi-Zade, Variational characterization of topological pressure of the amenable groups of transformations, Dokl. Akad. Nauk SSSR 254(1980), no.3, 545--549 (in Russian).

\bibitem{Ward-zhang}
T. Ward and Q, Zhang, The Abramov-Rokhlin entropy addition formula for amenable group actions, Monatsh. Math. 114(1992), no.3--4, 317--329.

\bibitem{Weiss}
B. Weiss, Actions of amenable group, Topics in Dynamics and Ergodic Theory, 226-262. London Math. Soc. Lecture Note Ser., 310,  Cambridge Univ. Press, Cambridge, 2003.



\end{thebibliography}

\end{document}